\documentclass[11pt]{amsart}
\hoffset=-0.6in
\voffset=-0.6in
\textwidth=6in
\textheight=9in
\usepackage{amsmath}
\usepackage{amsthm}
\usepackage{amssymb}
\usepackage{amsfonts,mathrsfs}
\usepackage{stmaryrd}
\usepackage{amsxtra}  
\usepackage{epsfig}
\usepackage{verbatim}
\usepackage[all]{xy}

\usepackage{mathtools, hyperref}
\usepackage{tikz}
\usetikzlibrary{shapes}
\usetikzlibrary{plotmarks}
\usepackage{bm}

\theoremstyle{plain}
\newtheorem{theorem}[equation]{Theorem}
\newtheorem{proposition}[equation]{Proposition}
\newtheorem{lemma}[equation]{Lemma}
\newtheorem{corollary}[equation]{Corollary}

\theoremstyle{remark}
\newtheorem{remark}[equation]{Remark}
\numberwithin{equation}{section}

\newcommand{\dee}{\partial}

\newcommand{\wt}{\widetilde}

 %red
 %blue
 %green
 %red

\def\norm#1{\left\Vert#1\right\Vert}

%\fractal

%calligraphy
\newcommand{\ca}{{\mathcal A}}

\newcommand{\cd}{{\mathcal D}}

\newcommand{\ck}{{\mathcal K}}

\newcommand{\co}{{\mathcal O}}

\newcommand{\cR}{{\mathcal R}}
\newcommand{\cs}{{\mathcal S}}
\newcommand{\ct}{{\mathcal T}}

\newcommand{\C}{{\mathbb C}}

\newcommand{\h}{{\mathbb H}}

\newcommand{\Q}{{\mathbb Q}}
\newcommand{\R}{{\mathbb R}}

\newcommand{\Z}{{\mathbb Z}}

\begin{document}

\title[Bergman and Sobolev]{Sobolev mapping of some holomorphic projections}
\author{L. D. Edholm \& J. D. McNeal}
\subjclass[2010]{32A36, 32A25, 32W05}
\begin{abstract} Sobolev irregularity of the Bergman projection on a family of domains containing the Hartogs triangle is shown. 
On the Hartogs triangle itself, a sub-Bergman projection is shown to satisfy better Sobolev norm estimates
than its Bergman projection.

\end{abstract}
%\date{\today}
\address{Department of Mathematics, \newline University of Michigan, Ann Arbor, Michigan, USA}
\email{edholm@umich.edu}

\address{Department of Mathematics, \newline The Ohio State University, Columbus, Ohio, USA}
\email{mcneal@math.ohio-state.edu}

\maketitle 

%%%%%%%%%% INTRODUCTION %%%%%%%%%%%%%%%%%%%%%
%%%%%%%%%%%%%%%% 
\section{Introduction}\label{S:Intro}

If $\Omega\subset\C^n$ is an open set, $1<p<\infty$, and $k\in\Z^+$, let $L^p_k(\Omega)$ denote the usual $L^p$ Sobolev space of order $k$: the measurable functions $f$
such that 
\begin{equation*}\label{D:LpSobolevNorm}
\norm{f}_{L_k^p(\Omega)} = \left( \sum_{|\alpha|\le k} \int_{\Omega} |\dee^\alpha f|^p\,dV \right)^{\frac1p}
\end{equation*} 
is finite, where derivatives are interpreted in the distributional sense. 

This paper continues investigations from  \cite{EdhMcN16}, \cite{EdhMcN17} by demonstrating irregularity in the $L^p$ Sobolev spaces for the Bergman projection associated to domains defined in \eqref{D:GenHartogs} below. These generalize the Hartogs triangle, which is $\h_1$ in \eqref{D:GenHartogs}.

The Bergman projection, $\bm{B} =\bm{B}_\Omega$, orthogonally projects $L^2(\Omega)$ onto the closed subspace $\co(\Omega)\cap L^2(\Omega)$,  $\co(\Omega)$ denoting holomorphic functions.
On $L^2(\Omega)=L^2_0(\Omega)$, $\bm{B}$ is represented as an integral operator
\begin{equation}\label{D:Bdef}
\bm{B}f(z)=\int_\Omega B_\Omega (z,w) f(w)\, dV(w),\qquad f\in L^2(\Omega),
\end{equation}
where $dV$ denotes Lebesgue measure and $B_\Omega(z,w)\in \co(\Omega)\times\overline{\co(\Omega)}$ is the Bergman kernel. If $f\notin L^2(\Omega)$
let \eqref{D:Bdef} {\it define} $\bm{B}f$, whenever the integral converges.  For many classes of pseudoconvex domains, precise pointwise estimates on $B_\Omega(z,w)$ were obtained and shown to imply $\left\|\bm{B}f\right\|_{L^p_k(\Omega)}\leq C\|f\|_{L^p_k(\Omega)}$  for all $1<p<\infty$ and $k\in\Z^+$. See \cite{ChaDup06, Koenig02, McNeal89, McNeal91, McNeal94, NagRosSteWai89, PhoSte77}. 
Thus $\bm{B}$ is $L^p_k$-regular in these cases. In the special case $p=2$, regularity for all $k\in\Z^+$ was shown in \cite{BoaStr91} whenever $\Omega$ has a plurisubharmonic defining function, without establishing pointwise estimates on $B_\Omega(z,w)$. This result was generalized in \cite{BoasStraube99, Kohn99}. However $L^2_k$ regularity does not always hold.  This irregularity was discovered 
in connection to Condition $R$ of Bell-Ligocka \cite{BellLigocka, Bell81a}: it is shown in \cite{Bar92}  that $\bm{B}$ is irregular on $L^2_k(W)$, for large $k$, on the 
pseudoconvex ``worm'' domains $W$ given in 
 \cite{DieFor77-1}. 

The irregularity of  $\bm{B}$ demonstrated in  \cite{EdhMcN16, EdhMcN17} is somewhat different. It occurs on the Lebesgue spaces $L^p(\Omega)=L^p_0(\Omega)$ for certain $p\neq 2$ and does not involve derivatives. For $\gamma >0$, define
\begin{equation}\label{D:GenHartogs}
\h_{\gamma} = \{(z_1,z_2)\in\C^2 : |z_1|^\gamma<|z_2|<1 \}. 
\end{equation}
In \cite{EdhMcN17}  it is shown that  the Bergman projection on $\h_\gamma$, for any $\gamma$, is a degenerate $L^p$ operator, bounded only for $p$ in a proper subinterval of $(1,\infty)$. In particular the situation
on $\h_1$ is that $\bm{B}:L^p\left(\h_1\right)\to L^p\left(\h_1\right)$ boundedly if and only if $\frac 43 <p <4$; see Theorem \ref{T:JGALpBoundedness} below for the general situation. The limited range of $L^p$ boundedness 
has consequences for approximation and duality theory in $\co\left(\h_\gamma\right)$, see \cite{ChaEdhMcN18}. Similar consequences hold when irregularity can be characterized on norm scales other than $L^p$.

It turns out $\bm{B}$ is very degenerate as an $L^p$ Sobolev map. 
  
\begin{theorem}\label{T:Main} Let  $\gamma = \frac mn \in \Q^+$ and $\bm{B}$ denote the Bergman projection on $\h_\gamma$.  
\begin{itemize}
\item[(1)] $\bm{B}$ fails to map $L^2_k(\h_\gamma) \to L^2_k(\h_\gamma)$, for $k\ge1$ an integer.
\medskip
\item[(2)] Let $j,l\in\Z^+$.  Then $\frac{\dee^{j+l}}{\dee z_1^j \dee z_2^l} \circ \bm{B}$ fails to map $C^{\infty}(\overline{\h_{\gamma}}) \to L^p(\h_\gamma)$ for 
\begin{equation*}
p \ge \frac{2m+2n}{m(l+1) + n(j+1)-1}.
\end{equation*}
\end{itemize}
\end{theorem}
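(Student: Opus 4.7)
The plan is to exploit the orthogonal Laurent monomial basis of $A^2(\h_\gamma)$ to produce smooth test functions whose Bergman projections are single Laurent monomials with prescribed singular behavior along $\{z_2=0\}$. Since $\h_\gamma$ is a bounded Reinhardt domain, polar-coordinate integration shows that $A^2(\h_\gamma)$ admits an orthogonal basis $\{z_1^a z_2^b\}_{(a,b)\in\Lambda_\gamma}$, where, writing $\gamma=m/n$ in lowest terms,
$$\Lambda_\gamma = \bigl\{(a,b)\in\Z_{\ge 0}\times\Z : n(a+1)+m(b+1)>0\bigr\},$$
and each $\|z_1^a z_2^b\|^2_{L^2(\h_\gamma)}$ is an explicit rational function of $a,b$. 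Whenever $(a,b)\in\Lambda_\gamma$ with $b<0$, the polynomial $f_{a,b}(z):= z_1^a\bar z_2^{|b|}$ lies in $C^\infty(\overline{\h_\gamma})\subset L^2_k(\h_\gamma)$ for every $k$, and orthogonality in the angular variables $\theta_1,\theta_2$ collapses the Bergman series of $f_{a,b}$ to a single term:
$$\bm{B}f_{a,b} \;=\; c_{a,b}\,z_1^a z_2^b,\qquad c_{a,b}\neq 0.$$
Both parts of the theorem therefore reduce to a Diophantine selection of $(a,b)$.

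For part (1), given $k\ge 1$ I would seek $(a,b)\in\Lambda_\gamma$ with $b<0$ and $(a,b-k)\notin\Lambda_\gamma$, equivalently $0<n(a+1)+m(b+1)\le mk$. The interval $(-1-n/m,\, k-1-n/m]$ has length $k\ge 1$, so contains a negative integer $b$, giving a valid choice with $a=0$. For this $(a,b)$, the Pochhammer coefficient $b(b-1)\cdots(b-k+1)$ is nonzero (all factors are negative), so $\tfrac{\dee^k}{\dee z_2^k}(z_1^a z_2^b)$ is a nonzero constant multiple of $z_1^a z_2^{b-k}\notin L^2(\h_\gamma)$, and hence $\bm{B}f_{a,b}\notin L^2_k(\h_\gamma)$ while $f_{a,b}\in L^2_k(\h_\gamma)$.

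For part (2), I would saturate the exponent by placing $(a,b)$ precisely on the boundary $n(a+1)+m(b+1)=1$ of $\Lambda_\gamma$, i.e., solving $na+mb=1-m-n$ in integers. Since $\gcd(m,n)=1$, Bezout's identity furnishes solutions; the shift $(a,b)\mapsto (a+m,b-n)$ preserves the equation, allowing a choice with $a\ge j$ (and $b<0$, automatic since $mb=1-m-n-na<0$). Then $\tfrac{\dee^{j+l}}{\dee z_1^j\dee z_2^l}(z_1^a z_2^b)=c'\,z_1^{a-j}z_2^{b-l}$ with $c'\neq 0$, and a direct polar-coordinate computation yields
$$\int_{\h_\gamma}|z_1|^{p(a-j)}|z_2|^{p(b-l)}\,dV = \infty \;\Longleftrightarrow\; p\ge \frac{2(m+n)}{m(l-b)+n(j-a)}.$$
The boundary equation $na+mb=1-m-n$ reduces the denominator on the right to exactly $m(l+1)+n(j+1)-1$, matching the stated threshold. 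The main obstacle is this combined Diophantine constraint in part (2)---placing $(a,b)$ precisely on the validity boundary while simultaneously arranging $a\ge j$---which is where the coprimality hypothesis $\gcd(m,n)=1$ enters essentially.
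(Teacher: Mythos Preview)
Your proposal is correct and follows essentially the same route as the paper: produce a smooth monomial $z_1^{a}\bar z_2^{|b|}$ whose Bergman projection, by angular orthogonality (the paper packages this as Lemma~\ref{P:EasyMonomials}), is a nonzero multiple of the Laurent monomial $z_1^{a}z_2^{b}$, with $(a,b)$ chosen so that the relevant derivative of $z_1^{a}z_2^{b}$ falls outside the target $L^p$ class. For part~(2) you and the paper make the identical choice, placing $(a,b)$ on the extremal line $na+mb=1-m-n$ via B\'ezout and shifting by $(m,-n)$ to arrange $a\ge j$; the polar-coordinate integral then produces exactly the threshold $\tfrac{2m+2n}{m(l+1)+n(j+1)-1}$.

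The only structural difference is in part~(1). The paper obtains it as a one-line corollary of part~(2): once $j+l\ge 1$ one has $m(l+1)+n(j+1)-1>m+n$, so the threshold drops below~$2$ and the already-constructed counterexample witnesses failure on $L^2_k$. Your direct argument (taking $a=0$ and $b$ in $(-1-n/m,\,k-1-n/m]$) is fine too, but note that ``length $k\ge1$'' alone only guarantees an integer in the interval, not a negative one; you should add the observation that either $-1$ lies in the interval (when $k\ge n/m$) or the whole interval sits below $-1$, so in either case a negative integer is available.
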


$\bm{B}$ also exhibits some regularity in $L^p$ Sobolev norms, but only on $\h_1$:

\begin{theorem}\label{T:Main2}
 $\bm{B}$ maps $L^p_1(\h_1) \to L^p_1(\h_1)$ boundedly, for $\frac{4}{3}<p<2$.
\end{theorem}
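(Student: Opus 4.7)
My plan is to transfer the problem to the bidisc via the biholomorphism $\Psi\colon\h_1\to\D\times\D^*$, $\Psi(z_1,z_2)=(z_1/z_2,z_2)$, exploiting the fact that $\{w_2=0\}$ is $L^2$-removable so that $A^2(\D\times\D^*)=A^2(\D^2)$. The Bergman transformation law then reads
$$\bm{B}_{\h_1}f(z)=\tfrac{1}{z_2}\,\bm{B}_{\D^2}(F)(\Psi(z)),\qquad F(w_1,w_2)=w_2\cdot(f\circ\Psi^{-1})(w_1,w_2).$$
Using $dV(z)=|w_2|^2\,dA(w)$, the norm $\|\bm{B}_{\h_1}f\|_{L^p(\h_1)}^p$ becomes $\int_{\D^2}|G|^p|w_2|^{2-p}\,dA$ with $G=\bm{B}_{\D^2}(F)$, while the chain-rule identities
$$\partial_{z_1}\bm{B}_{\h_1}f=\tfrac{1}{w_2^2}\partial_{w_1}G\circ\Psi,\qquad \partial_{z_2}\bm{B}_{\h_1}f=\tfrac{1}{w_2^2}\bigl[w_2\partial_{w_2}G-w_1\partial_{w_1}G-G\bigr]\circ\Psi$$
convert each first-order Sobolev quantity into an integral of the form $\int_{\D^2}|\cdot|^p|w_2|^{2-2p}\,dA$. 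The same change of variables, applied to $f$ itself, shows that $\|f\|_{L^p_1(\h_1)}$ controls $\|F\|_{L^p(|w_2|^{2-p})}$ together with $\|\partial_{w_j}F\|_{L^p(|w_2|^{2-2p})}$ and the corresponding $\bar\partial$-terms.

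The problem has thus become two weighted $L^p$ bounds on the bidisc. Because $\bm{B}_{\D^2}=\bm{B}_\D\otimes\bm{B}_\D$ and the weight is a power of $|w_2|$ only, Fubini reduces both to the one-variable Békollé--Bonami theorem: $\bm{B}_\D$ is bounded on $L^p(\D,|w|^\alpha\,dA)$ if and only if $-2<\alpha<2(p-1)$. Taking $\alpha=2-p$ reproduces the $L^p$-boundedness range $4/3<p<4$ for $\bm{B}_{\h_1}$ already in \cite{EdhMcN17}, while $\alpha=2-2p$ forces $1<p<2$; the intersection is the interval $4/3<p<2$ asserted in the theorem. To handle the first-order terms, I use the factorization
$$\partial_{w_1}B_{\D^2}(w,\zeta)=\bar\zeta_1\,B_{\D,1}(w_1,\zeta_1)\,B_\D(w_2,\zeta_2),$$
where $B_{\D,s}$ denotes the kernel of $A^2(\D,(1-|w|^2)^s\,dA)$, presenting $\partial_{w_1}G$ as a Bergman-type integral operator acting on $\bar\zeta_1F$. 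A Forelli--Rudin estimate yields its boundedness on $L^p(|w_2|^{2-2p})$ in the range $1<p<2$, and an analogous factorization handles the combination $w_2\partial_{w_2}G-w_1\partial_{w_1}G-G$ coming from $\partial_{z_2}\bm{B}_{\h_1}f$.

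The main obstacle is the weight jump from $|w_2|^{2-p}$ to $|w_2|^{2-2p}$: one needs the Sobolev input on $f$, not merely its $L^p$ norm, to absorb the extra factor of $|w_2|^{-p}$. The key is an integration by parts in $\bar\zeta_1$ via the companion identity $\partial_{\bar\zeta_1}B_\D=w_1B_{\D,1}$, which trades $\bar\zeta_1F$ for $\bar\zeta_1\partial_{\bar\zeta_1}F$ (and a copy of $F$ itself), modulo a boundary term that vanishes against the weighted measure. This rewrites the delicate part of $\partial_{w_1}G$ as $\bm{B}_{\D^2}$ applied to a $\bar\partial$-derivative of $F$, which is controlled by $\|f\|_{L^p_1(\h_1)}$ through the chain-rule reduction above. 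A parallel $w_2$-integration treats the mixed combination appearing in $\partial_{z_2}\bm{B}_{\h_1}f$. The endpoints of the range reflect the structure of the problem: at $p=2$ the weight $|w_2|^{2-2p}=|w_2|^{-2}$ is not locally integrable, consistent with the failure in Theorem~\ref{T:Main}(1), and $p=4/3$ is the threshold for $L^p$-boundedness of $\bm{B}_{\h_1}$ itself.
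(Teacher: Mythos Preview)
Your bidisc transfer and the B\'ekoll\'e--Bonami framework are natural, and for the $\partial_{z_2}$ piece your scheme actually works cleanly: the combination $w_2\partial_{w_2}G - w_1\partial_{w_1}G - G$ equals $-\bm{B}_{\D^2}$ applied to $\ct_{\zeta_2}F - \ct_{\zeta_1}F + F$ (with $\ct_\zeta = \bar\zeta\partial_{\bar\zeta} - \zeta\partial_\zeta$), and a short computation shows this input collapses to $|\zeta_2|^2\partial_{\bar z_2}f - \zeta_2^2\partial_{z_2}f$, which is in $L^p(|\zeta_2|^{2-2p})$ with norm $\lesssim \|f\|_{L^p_1(\h_1)}$; then B\'ekoll\'e--Bonami in $\zeta_2$ finishes.

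The $\partial_{z_1}$ piece, however, has a real gap. The operator with kernel $B_{\D,1}(w_1,\zeta_1)$ integrated against \emph{unweighted} Lebesgue measure is \emph{not} bounded on $L^p(\D)$ for any $p>1$: a Forelli--Rudin computation gives $\int_\D |1-w_1\bar\zeta_1|^{-3}(1-|\zeta_1|^2)^{-\epsilon}\,dV \approx (1-|w_1|^2)^{-1-\epsilon}$, so the Schur test fails by a full power. Equivalently, on holomorphic inputs this operator is just $\partial_{w_1}\circ\bm{B}_\D$, which loses a derivative. So your assertion that ``a Forelli--Rudin estimate yields its boundedness'' is incorrect; the obstruction is in the $w_1$-direction, not the $w_2$-weight. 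Your proposed cure---integrate by parts in $\bar\zeta_1$ using $\partial_{\bar\zeta_1}B_\D = w_1 B_{\D,1}$---does not remove the difficulty: carrying it out produces
\[
\partial_{w_1}G \;=\; -\tfrac{1}{w_1}\,\bm{B}_{\D^2}\bigl[F + \bar\zeta_1\partial_{\bar\zeta_1}F\bigr] \;+\; \text{(boundary term on }|\zeta_1|=1\text{)},
\]
and you have suppressed the factor $1/w_1$, which is exactly the singularity that must be controlled. The boundary term does not vanish either, since $\partial_{\bar\zeta_1}$ is not tangential to $\partial\D$.

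The paper fixes both issues at once by a device you are missing: work with the \emph{tangential} field $\ct_{\zeta_1}$ (so no boundary terms on discs/annuli) and, crucially, replace the kernel by $K(w,\zeta) = B_{\D^2}(w,\zeta) - B_{\D^2}\bigl((0,w_2),(0,\zeta_2)\bigr)$ before differentiating. Since $\ct_{\zeta_1}$ annihilates the subtracted term, nothing is lost, and $K$ carries an explicit factor $w_1\bar\zeta_1$ that cancels the $1/w_1$. In the paper this is done directly on $\h_{1/n}$ (Theorem~4.3, equations (4.12)--(4.15)) and the resulting kernels are handled by a Schur-type lemma (Lemma~4.1) rather than B\'ekoll\'e--Bonami; your bidisc route can be completed the same way once you incorporate the kernel modification, but as written the $\partial_{z_1}$ estimate is not proved.
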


Notice the range on $p$, for boundedness on $L^p_1(\h_1)$, is smaller than the range for boundedness on $L^p_0(\h_1)$. More general statements than Theorem  \ref{T:Main2}
can be made -- for separate directional derivatives and on domains other than $\h_1$ -- but these do not yield boundedness theorems on the full Sobolev spaces $L^p_k$; see Section \ref{S:PositiveMapping}.

Proving Theorems \ref{T:Main} and \ref{T:Main2} requires understanding how derivatives commute past the Bergman projection.
An initial difficulty is that $\h_\gamma$ is not smoothly bounded, so Stokes' theorem cannot be applied in the usual way, e.g., as in \cite[Lemma 3]{McNSte94}, \cite[Proposition 3.3]{McNSte97}, or \cite[Lemma 5.1]{NagRosSteWai89}.
We circumvent this by applying Stokes theorem on appropriately chosen discs and annuli intersecting $\h_\gamma$. Theorem \ref{T:Main} is proved in Section \ref{S:NegativeMapping}. After developing some general tools,  
Theorem \ref{T:Main2} is proved as Corollary \ref{C:regularity} in Section \ref{S:PositiveMapping}.  To partially repair  irregularity described by Theorem \ref{T:Main}, substitute operators related to $\bm{B}$ are considered in Section \ref{S:SubstituteOperators}.

There are other papers showing  Bergman irregularity on $L^p_0(\Omega)$, for specific pseudoconvex $\Omega$: \cite{BarSah12, ChaZey16, Chen17, Zey13}. A unifying result, explaining irregularity in these cases and  \cite{EdhMcN16, EdhMcN17},  is lacking. A weighted regularity result on $L^p_k(\h_1)$, $k>0$, related to Theorem \ref{T:Main}, was obtained in \cite{Chen17c}.
See also the paper \cite{Bar84} for a nonpseudoconvex domain with $L^p_0$-irregularity of its Bergman projection.

When $X$ and $Y$ are expressions involving several variables, write $X\lesssim Y$ to mean $X\leq C Y$ for a constant $C$ independent of certain of these variables. The independence of which variables is specified in use.
$X\approx Y$ means $X\lesssim Y\lesssim X$ holds.

\section{Sobolev regularity in one variable}\label{S:C1} 
Let $D \subset \C$ denote the unit disc.  The Bergman projection $\bm{B}_D$ is bounded from $L^p_k(D)\to L^p_k(D)$ for all $1<p<\infty$ and $k\in\Z^+$. 
This is well-known when $k=0$, apparently first proved in \cite{ZahJud64} using singular integral operator theory; see \cite[Chapter 2]{DurSch04}. For any $k\in\Z^+$, a proof modeled on arguments in \cite{McNSte94} is given below. This serves as a template for the proof of  Theorem \ref{T:Main2}.

The Bergman kernel of $D$ is 
\begin{equation}\label{E:BergmanKernelDisc}
B_D(z,w) = \frac{1}{\pi} \sum_{j=0}^\infty (j+1) (z\bar{w})^j = \frac{1}{\pi} \frac{1}{(1-z\bar{w})^2}.
\end{equation}
Note $B_D(z,w)$ can be viewed as a function of  $s = z\bar{w}$.

\subsection{$L^p_0$ boundedness}
A family of integral estimates will be used. When $A =0$, the result is often called the Forelli-Rudin lemma; see \cite{ForRud74}, \cite{RudinFunctiontheory}, or \cite{ZhuBergmanbook} for the `standard' proof, based on
asymptotics of the gamma function. Different proofs are given in \cite{EdhMcN16}, \cite{EdhMcN17}, \cite{Chen17}, which also address $A\neq 0$.

\begin{lemma}\label{L:IntegralEstimateDisc}
Let $D\subset\C$ be the unit disc, $\epsilon \in (0,1)$ and $A < 2$.  Then for $z\in D$,
$$\int_D \frac{(1 - |w|^2)^{-\epsilon}}{|1 - z\bar{w}|^2}|w|^{-A} \, dV(w) \lesssim (1 - |z|^2)^{-\epsilon},$$
for a constant $C=C(A, \epsilon)$ independent of $z$.
\end{lemma}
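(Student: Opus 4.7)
The plan is to exploit the angular symmetry of the denominator to reduce the integral to a one-dimensional radial integral. Writing $w = re^{i\theta}$, the standard Poisson-kernel identity
\[
\int_0^{2\pi} \frac{d\theta}{|1 - z\bar w|^2} \ =\ \frac{2\pi}{1 - |z|^2 r^2}
\]
disposes of the $\theta$-integration, and the substitution $u = r^2$ then reduces the claim to showing
\[
J(z) \ :=\ \int_0^1 \frac{(1-u)^{-\epsilon}}{1 - |z|^2 u}\, u^{-A/2}\, du \ \lesssim\ (1-|z|^2)^{-\epsilon}.
\]

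I would then split the $u$-interval dyadically into $[0,\tfrac12]$ and $[\tfrac12,1]$. On $[0,\tfrac12]$ both factors $(1-u)^{-\epsilon}$ and $(1-|z|^2 u)^{-1}$ are bounded uniformly in $z$, while $\int_0^{1/2} u^{-A/2}\, du$ converges precisely because $A < 2$; this portion contributes $O(1)$, which is dominated by $(1-|z|^2)^{-\epsilon}$. On $[\tfrac12,1]$ the factor $u^{-A/2}$ is bounded and may be absorbed into the implicit constant, leaving a classical Forelli--Rudin-type tail.

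For this tail the key observation is the elementary decomposition
\[
1 - |z|^2 u \ =\ (1 - |z|^2) + |z|^2 (1-u).
\]
When $|z|^2 \le \tfrac12$ the denominator is bounded below and the tail is trivially $O(1)$. When $|z|^2 > \tfrac12$ the decomposition gives $1 - |z|^2 u \gtrsim (1-|z|^2) + (1-u)$, and the substitutions $t = 1-u$ followed by rescaling $t = (1-|z|^2)s$ convert the tail into $(1-|z|^2)^{-\epsilon}$ times an integral bounded by $\int_0^\infty s^{-\epsilon}(1+s)^{-1}\, ds$, which is finite because $0 < \epsilon < 1$.

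The conceptual content is the clean separation of the interior singularity (controlled by $A < 2$) from the boundary singularity (controlled by $\epsilon < 1$); no individual step is hard once the dyadic split makes that separation explicit. A slicker but less self-contained alternative would be to recognize $J(z)$ as the Gauss hypergeometric ${}_2F_1(1,\, 1 - A/2;\, 2 - A/2 - \epsilon;\, |z|^2)$ and invoke its standard $t\to 1^-$ asymptotic in the regime $c - a - b = -\epsilon < 0$, but the elementary route above keeps the dependence on $A$ and $\epsilon$ completely transparent.
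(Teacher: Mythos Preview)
Your argument is correct. The angular integration via the Poisson identity, the substitution $u=r^2$, the dyadic split isolating the interior singularity at $u=0$ (handled exactly by $A<2$) from the boundary singularity at $u=1$ (handled by $0<\epsilon<1$), and the rescaling $t=(1-|z|^2)s$ all go through cleanly. The hypergeometric remark is also accurate: with $a=1$, $b=1-A/2$, $c=2-A/2-\epsilon$ one has $c-a-b=-\epsilon$, and the Euler integral representation is valid precisely under the hypotheses $A<2$ and $\epsilon<1$.

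As for comparison: the paper does not actually supply a proof of this lemma. It states the result and points to the literature, noting that the ``standard'' proof for $A=0$ uses asymptotics of the gamma function (essentially the hypergeometric route you sketch as an alternative), and citing \cite{EdhMcN16}, \cite{EdhMcN17}, \cite{Chen17} for proofs covering $A\neq 0$. Your elementary dyadic argument is in the spirit of those latter references and has the virtue of making the roles of the two hypotheses completely explicit without any special-function machinery; the gamma/hypergeometric approach is shorter to state but hides the mechanism. Either is perfectly adequate here.
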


A general version of Schur's Lemma will also be used. The next result extends Lemma 2.4 from \cite{EdhMcN16}.

\begin{lemma}\label{L:GeneralSchur}
Let $\Omega \subset \C^n$ be a domain and $K:\Omega \times\Omega \to [0,\infty)$ a kernel function. Suppose there is an auxiliary function $h:\Omega \to [0,\infty)$ and numbers $0\le \alpha<\beta$, $0\le \gamma<\delta$ such that the following two estimates hold: 
\smallskip
For all $\epsilon \in [\alpha,\beta)$,
\begin{equation}\label{E:Schur1}
\int_\Omega K(z,w)h(w)^{-\epsilon} \, dV(w) \lesssim h(z)^{-\epsilon},
\end{equation}
and for all $\epsilon \in [\gamma,\delta)$,
\begin{equation}\label{E:Schur2}
\int_\Omega K(z,w)h(z)^{-\epsilon} \, dV(z) \lesssim h(w)^{-\epsilon}.
\end{equation}
\smallskip

Then the operator $\ck$, 
$\ck (f)(z) := \int_\Omega K(z,w)f(w)\,dV(w)$, maps
$L^p(\Omega)\to L^p(\Omega)$ for all p in the range 
\begin{equation}\label{E:p_range}
\frac{\gamma}{\beta}+1 < p < \frac{\delta}{\alpha}+1.
\end{equation}
\begin{proof}
Let $\frac1p + \frac1q = 1$, $g \in L^p(\Omega)$ and $s\in[\alpha,\beta)$.  Then
\begin{align*}
\left|\ck(f)(z)\right|^p &\le \left(\int_\Omega K(z,w) |f(w)|^p h(w)^{\frac{sp}{q}} \,dV(w)\right) \left(\int_\Omega K(z,w) h(w)^{-s} \,dV(w)\right)^{\frac pq} \\
&\lesssim \left(\int_\Omega K(z,w) |f(w)|^p h(w)^{\frac{sp}{q}} \,dV(w)\right) h(z)^{-\frac{sp}{q}}. 
\end{align*}
The first inequality follows from H\"older's inequality, the second from \eqref{E:Schur1}. Now
\begin{align}
\int_{\Omega}\left|\ck(f)(z)\right|^pdV(z) &\lesssim \int_\Omega \left(\int_\Omega K(z,w) |f(w)|^p h(w)^{\frac{sp}{q}} \,dV(w)\right) h(z)^{-\frac{sp}{q}} dV(z) \notag \\
&= \int_\Omega |f(w)|^p\, h(w)^{\frac{sp}{q}} \left(\int_\Omega K(z,w) h(z)^{-\frac{sp}{q}} \,dV(z)\right) dV(w) \label{E:UseSchur2}.
\end{align}
When $s\in[\alpha,\beta)$ may chosen so that also $\frac{sp}{q}\in[\gamma,\delta)$, estimate \eqref{E:Schur2} implies
\begin{equation*}
\eqref{E:UseSchur2} \lesssim \int_\Omega |f(z)|^p\,dV(z),
\end{equation*}
and thus $\ck:L^p(\Omega) \to L^p(\Omega)$ boundedly.  The existence of such an $s$  is equivalent to saying {\em both} the inequalities
$\frac{q}{p}\gamma<\beta$ and $\alpha <\frac{q}{p} \delta$
hold. This is equivalent to saying \eqref{E:p_range} holds, as claimed.
\end{proof}
\end{lemma}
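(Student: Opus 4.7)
The plan is to deploy a weighted Schur-test argument, using the parameter $\epsilon$ from the hypotheses as a free weight exponent to be optimized at the end. First I would introduce an auxiliary parameter $s\geq 0$, let $q = p/(p-1)$ denote the conjugate exponent of $p$, and apply H\"older's inequality to $\ck f(z)$ after writing $K(z,w) = K(z,w)^{1/q}\cdot K(z,w)^{1/p}$ and inserting the trivial factor $h(w)^{-s/q}\cdot h(w)^{s/q}$. This yields
\begin{equation*}
|\ck f(z)|^p \leq \Bigl(\int_\Omega K(z,w)h(w)^{-s}\,dV(w)\Bigr)^{p/q} \Bigl(\int_\Omega K(z,w)|f(w)|^p h(w)^{sp/q}\,dV(w)\Bigr).
\end{equation*}

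Next, provided $s \in [\alpha,\beta)$, hypothesis \eqref{E:Schur1} controls the first factor by a constant times $h(z)^{-sp/q}$. Integrating in $z$ and applying Fubini then reduces matters to bounding the inner integral $\int_\Omega K(z,w)h(z)^{-sp/q}\,dV(z)$; hypothesis \eqref{E:Schur2} controls this by $h(w)^{-sp/q}$ provided $sp/q \in [\gamma,\delta)$. The weighted factors cancel and $\|\ck f\|_{L^p(\Omega)} \lesssim \|f\|_{L^p(\Omega)}$ drops out.

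The only remaining task is determining those $p$ for which a valid $s$ exists. Using $q/p = 1/(p-1)$, the two required conditions on $s$ combine into
\begin{equation*}
s \in [\alpha,\beta) \cap [\gamma/(p-1),\delta/(p-1)),
\end{equation*}
which is nonempty precisely when $\alpha(p-1) < \delta$ and $\gamma < \beta(p-1)$; rearranging recovers \eqref{E:p_range}. I do not expect any substantive obstacle here. The main subtleties are purely bookkeeping: choosing the H\"older split and the test weight $h(w)^{-s/q}$ so that both Schur-type estimates can be invoked with a single $s$, and verifying that the half-open ranges in the hypotheses are compatible with the strict inequality in \eqref{E:p_range} (which is automatic since we only need \emph{existence} of $s$, not a range of such).
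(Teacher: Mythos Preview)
Your proposal is correct and follows essentially the same route as the paper's proof: the same H\"older split with weight $h(w)^{-s/q}$, the same application of \eqref{E:Schur1} for $s\in[\alpha,\beta)$ followed by Fubini and \eqref{E:Schur2} for $sp/q\in[\gamma,\delta)$, and the same reduction of the feasibility condition on $s$ to the range \eqref{E:p_range}.
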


Lemmas \ref{L:IntegralEstimateDisc} and \ref{L:GeneralSchur} suffice to show $L^p_0(D)$ boundedness of $\bm{B}_D$.
\begin{corollary}\label{C:LpBergmanDisc}
The Bergman projection $\bm{B}_{D}$ maps $L^p(D)$ to $L^p(D)$ for all $1<p<\infty$. 

In fact, the operator whose kernel is $|B_D(z,w)|$ is bounded on $L^p(D)$ for $1<p<\infty$.
\end{corollary}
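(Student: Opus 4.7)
The plan is to invoke the generalized Schur's lemma (Lemma \ref{L:GeneralSchur}) with the kernel $K(z,w) = |B_D(z,w)| = \frac{1}{\pi|1-z\bar w|^2}$ and the natural auxiliary function $h(z) = 1-|z|^2$. Since the second assertion of the corollary (boundedness of the operator with kernel $|B_D(z,w)|$) is strictly stronger than the first, it suffices to prove it. The convergence of the integral defining the projection on $L^p$ functions then makes the identification of this operator with $\bm{B}_D$ routine.

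First, I verify the two hypotheses \eqref{E:Schur1} and \eqref{E:Schur2} of Lemma \ref{L:GeneralSchur}. Applying Lemma \ref{L:IntegralEstimateDisc} with $A=0$, for every $\epsilon \in (0,1)$,
\begin{equation*}
\int_D \frac{1}{|1-z\bar w|^2}\,(1-|w|^2)^{-\epsilon}\,dV(w) \;\lesssim\; (1-|z|^2)^{-\epsilon}.
\end{equation*}
Because $|B_D(z,w)| = |B_D(w,z)|$, the analogous inequality with the roles of $z$ and $w$ swapped is exactly the same estimate. Hence both \eqref{E:Schur1} and \eqref{E:Schur2} hold on the interval $[\alpha,1)$ for any fixed $\alpha \in (0,1)$, with $\gamma = \alpha$ and $\beta = \delta = 1$.

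Next, I exploit the flexibility of Lemma \ref{L:GeneralSchur} to cover the full range $1<p<\infty$. With the choices above, the admissible range \eqref{E:p_range} becomes
\begin{equation*}
\alpha + 1 \;<\; p \;<\; \tfrac{1}{\alpha} + 1.
\end{equation*}
Given any $p \in (1,\infty)$, selecting $\alpha \in (0,1)$ small enough that $\alpha < p-1$ and $\alpha < \tfrac{1}{p-1}$ places $p$ in this range; thus the operator with kernel $|B_D(z,w)|$ is bounded on $L^p(D)$ for every such $p$.

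The only subtle point is the mild one just encountered: the Forelli--Rudin estimate of Lemma \ref{L:IntegralEstimateDisc} is valid only for $\epsilon > 0$, so $\alpha = 0$ is not directly allowed and a single application of Schur would not cover all of $(1,\infty)$ in one shot. This is resolved by the per-$p$ choice of $\alpha$ described above. No further analytic input is required; the entire argument is a clean invocation of the two lemmas already in place, and the second assertion of the corollary drops out immediately because the Schur argument is performed with $|B_D|$ throughout.
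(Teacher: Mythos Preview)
Your proof is correct and follows essentially the same approach as the paper: apply Lemma~\ref{L:GeneralSchur} with $K(z,w)=|B_D(z,w)|$ and $h(w)=1-|w|^2$, use Lemma~\ref{L:IntegralEstimateDisc} (with $A=0$) plus the conjugate symmetry of $B_D$ to verify both Schur hypotheses on $(0,1)$, and then let $\alpha\to 0^+$ (equivalently, choose $\alpha$ small depending on $p$) to cover all of $(1,\infty)$. The only cosmetic difference is that the paper phrases the last step as ``sending $\alpha\to 0^+$'' rather than a per-$p$ choice.
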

\begin{proof}
Lemma \ref{L:GeneralSchur} is used with $K(z,w)= |B_D(z,w)|$ and $h(w) = 1- |w|^2$ as the auxiliary function. Lemma \ref{L:IntegralEstimateDisc} shows that estimate \eqref{E:Schur1} holds for all $0<\epsilon <1$.  Since $B_D(z,w)$ is conjugate symmetric, \eqref{E:Schur1} is equivalent to \eqref{E:Schur2} with $\alpha = \gamma$ and $\beta=\delta$.  Lemma \ref{L:GeneralSchur} then gives the claimed boundedness by setting $\beta=1$ and sending $\alpha \to 0^+$.
\end{proof}

\subsection{Integration by parts}\label{SS:VectorFieldTw}
Define the vector field
\begin{equation}\label{D:VectorFieldTw}
\ct_w = \bar{w}\frac{\dee}{\dee\bar{w}} - w \frac{\dee}{\dee w},
\end{equation}
and write $\ct_w^k$ to mean $\ct_w \circ \dots \circ \ct_w$ composed $k$ times. If $f\in L_k^p(D)$, clearly $\ct_w^kf \in L^p(D)$. If, in addition, $f\in\co(D)$, a partial converse holds: if $D_\delta=\{z\in D: |z|>\delta\}$, then $\|f\|_{L_k^p(D_\delta)}\lesssim \left\|\ct_w^kf\right\|_{L^p(D_\delta)}$ for a constant independent of $f$.\footnote{A version of this also holds in several variables. See, e.g., \cite{Boas87, Barrett86, HerMcN12} for a statement of the result, as well as elementary proofs for $p=2$. For general $p$, see \cite{Detraz81}.} This holds since any first derivative can be written as a linear combination $A\ct_w +B\frac\partial{\partial\bar w}$ on $D_\delta$, for bounded functions $A$ and $B$. 

The crucial property $\ct_w$ satisfies is
\begin{proposition}\label{P:baddirection}
$\ct_w$ annihilates $C^1$ radial functions of $w\in\C$.
\end{proposition}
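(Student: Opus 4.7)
The proposition is essentially a chain-rule calculation, so the plan is just to identify the right encoding of ``radial'' and compute.

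My first approach would be the direct one: if $f$ is a $C^1$ radial function on $\C$, then $f(w)$ depends only on $|w|$, and since $|w|^2 = w\bar w$ is a $C^\infty$ function of $w, \bar w$ with no zeros in its derivative except at the origin, I can write $f(w) = g(w\bar w)$ for some $C^1$ function $g$ on $[0,\infty)$. Applying Wirtinger derivatives via the chain rule gives
\begin{equation*}
\frac{\dee f}{\dee \bar w} = w\, g'(w\bar w), \qquad \frac{\dee f}{\dee w} = \bar w\, g'(w\bar w),
\end{equation*}
so
\begin{equation*}
\ct_w f = \bar w \cdot w\, g'(w\bar w) - w \cdot \bar w\, g'(w\bar w) = 0.
\end{equation*}

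An alternative presentation, perhaps more conceptual, is to pass to polar coordinates $w = re^{i\theta}$ and verify the identity
\begin{equation*}
\ct_w = \bar w \frac{\dee}{\dee \bar w} - w \frac{\dee}{\dee w} = i\,\frac{\dee}{\dee \theta},
\end{equation*}
using $\frac{\dee}{\dee \bar w} = \tfrac{e^{i\theta}}{2}\bigl(\frac{\dee}{\dee r} + \tfrac{i}{r}\frac{\dee}{\dee \theta}\bigr)$ and the conjugate formula for $\frac{\dee}{\dee w}$. Since radial functions are exactly those annihilated by $\frac{\dee}{\dee \theta}$, the claim follows immediately. This also makes transparent why $\ct_w$ is the ``good'' vector field for integration by parts against the Bergman kernel: $B_D(z,w) = \frac{1}{\pi(1-z\bar w)^{-2}}$ is not a function of $|w|$ alone, but as a function of $w$ (with $z$ fixed) has complicated radial behavior, whereas many weight factors that will appear later are radial and will be killed.

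There is no real obstacle here: the computation is a one-liner in either presentation. I would choose the polar-coordinates version in the write-up since it also motivates the definition of $\ct_w$ and foreshadows its later use in integration by parts on annular regions in $\h_\gamma$, where the angular symmetry in each variable is precisely what the proof of Theorem \ref{T:Main2} will exploit.
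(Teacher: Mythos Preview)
Your first approach is exactly the paper's proof: write the radial function as $g(|w|^2)$ for a $C^1$ function $g$ on $[0,\infty)$ and apply the chain rule to see the two terms of $\ct_w$ cancel. Your alternative polar-coordinate identity $\ct_w = i\,\dee/\dee\theta$ is also correct and gives a nice conceptual explanation, though the paper does not use it; note the small typo in your aside---the Bergman kernel is $\frac{1}{\pi(1-z\bar w)^{2}}$, not $\frac{1}{\pi(1-z\bar w)^{-2}}$.
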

\begin{proof}
A $C^1$ radial function $g$ can be written as $g(w) = f(|w|^2)$, where $f\in C^1\left([0,\infty)\right)$.  Therefore
\begin{equation*}
\ct_w g = \bar{w} f'(|w|^2)\cdot w - w f'(|w|^2)\cdot \bar{w} \equiv 0.
\end{equation*}
\end{proof}
Recall that $r:\C \to \R$ is a defining function for $\Omega$ if $\{r<0\}=\Omega$ and $|\nabla r(w)|\neq 0$ when $r=0$.
Proposition \ref{P:baddirection} implies, in particular,  that $\ct_w$ annihilates defining functions of discs and annuli centered at the origin along their boundaries. 
An integration by parts result follows:
\begin{proposition}\label{P:IntByParts}
Let $\Omega \subset \C$ be either a disc or an annulus centered at the origin.  Then if $f,g\in L^1_1(\Omega)\cap C\left(\overline\Omega\right)$,
\begin{equation*}
\int_\Omega \ct_w f\cdot g\,dV= -\int_\Omega f\cdot\ct_w g\,dV.
\end{equation*}
\begin{proof}  Choose a defining function for $\Omega$ with $|\nabla r(w)| = 1$ for all $w\in b\Omega$.  Stokes' theorem yields 
\begin{align*} 
\int_\Omega \ct_w f\cdot g\,dV &= \int_\Omega \frac{\dee f}{\dee\bar{w}}(w) \cdot\bar{w} g(w)\,dV(w) - \int_\Omega \frac{\dee f}{\dee w}(w) \cdot w  g(w)\,dV(w) \notag \\
&= -\int_\Omega f\cdot \frac{\dee }{\dee\bar{w}}\big(\bar{w} g(w)\big) \,dV + \int_{b\Omega} f\cdot\bar{w} g \cdot \frac{\dee r}{\dee \bar{w}} \, dS \notag \\
& \qquad \qquad\qquad  + \int_\Omega f\cdot \frac{\dee }{\dee {w}}\big( {w} g(w)\big) \,dV - \int_{b\Omega} f {w} g \cdot \frac{\dee r}{\dee {w}} \, dS \notag \\
&= -\int_\Omega f\cdot \Big[\frac{\dee }{\dee\bar{w}}\big(\bar{w} g(w)\big) - \frac{\dee }{\dee {w}}\big( {w} g(w)\big) \Big]\,dV+\int_{b\Omega} fg\cdot \ct_w r(w)\, dS \notag \\
&= -\int_\Omega f\cdot \ct_wg\,dV.
\end{align*}
Here $dS$ denotes induced surface measure on $b\Omega$.
The last boundary integral vanishes since $\ct_wr\equiv 0$ on $b\Omega$.
\end{proof}
\end{proposition}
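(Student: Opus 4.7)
My plan is to apply Stokes' theorem term-by-term to the integrand $\ct_w f \cdot g$, show that the boundary contributions collapse to a single integral of $fg \cdot \ct_w r$ over $b\Omega$, and then invoke Proposition \ref{P:baddirection} to kill that boundary term.

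Concretely, since $\Omega$ is either a disc or an annulus centered at $0$, each boundary component is a circle $\{|w|=R\}$, which is a level set of the radial function $w \mapsto |w|^2$. Hence I can select a defining function $r$ for $\Omega$ that is radial (in particular, on the annulus the defining function may be taken as a smooth radial function which is negative on the annular region and has $|\nabla r| = 1$ on each boundary circle). Writing
\[
\ct_w f \cdot g = \bar{w}\,\frac{\dee f}{\dee\bar{w}}\,g - w\,\frac{\dee f}{\dee w}\,g,
\]
I apply the complex form of Stokes' theorem (i.e. $\int_\Omega \frac{\dee F}{\dee\bar w}\,dV = \int_{b\Omega} F \frac{\dee r}{\dee\bar w}\,dS$ and its conjugate) to each of the two pieces after transferring the $\dee/\dee\bar w$ and $\dee/\dee w$ derivatives onto $\bar{w} g$ and $w g$, respectively. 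This yields two volume integrals that recombine into $-\int_\Omega f \cdot \ct_w g\,dV$, plus two boundary integrals that recombine into $\int_{b\Omega} f g \cdot \ct_w r\,dS$.

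Now I invoke Proposition \ref{P:baddirection}: because $r$ was chosen radial, $\ct_w r \equiv 0$ identically, and in particular on $b\Omega$. Thus the boundary integral vanishes, leaving exactly the claimed identity.

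The main potential obstacle is the low regularity assumption $f,g \in L^1_1(\Omega)\cap C(\overline{\Omega})$, which is borderline for a direct application of Stokes' theorem (the classical statement wants $C^1$ up to the boundary). I would handle this by a standard mollification argument: approximate $f$ and $g$ by smooth functions $f_\epsilon, g_\epsilon$ that converge to $f,g$ in $L^1_1$ and uniformly on $\overline\Omega$, apply the proposition for the smooth approximants where Stokes' theorem holds classically, and pass to the limit, the uniform convergence on $\overline\Omega$ together with the boundedness of $\bar w, w$ and the $L^1$ convergence of derivatives being enough to justify passing to the limit on both sides.
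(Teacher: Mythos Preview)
Your proposal is correct and follows essentially the same route as the paper: expand $\ct_w f\cdot g$, integrate each piece by parts via Stokes' theorem, recombine the volume terms into $-\int_\Omega f\cdot\ct_w g\,dV$ and the boundary terms into $\int_{b\Omega} fg\cdot\ct_w r\,dS$, then kill the latter using that $r$ is radial (Proposition~\ref{P:baddirection}). The only addition is your mollification remark to justify Stokes under the stated regularity, which the paper leaves implicit.
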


\subsection{$L^p_k$ boundedness for $k>0$}\label{SS:MappingBergmanDisc}
\begin{theorem}\label{T:BergmanMappingLpSobolevDisc}
The Bergman projection $\bm{B}_{D}$ is a bounded operator from $L^p_k(D) \to L^p_k(D)$ for all $k\in \Z^+$ and $1<p<\infty$.
\end{theorem}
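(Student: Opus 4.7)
My strategy is to commute $\ct_z$ through $\bm{B}$, thereby reducing $L^p_k$ boundedness to the $L^p_0$ case already established in Corollary \ref{C:LpBergmanDisc}. The kernel identity that drives this is
\[
\ct_z B_D(z,w)=-\ct_w B_D(z,w),
\]
which is immediate from $B_D(z,w)=F(z\bar w)$ with $F(s)=\tfrac{1}{\pi(1-s)^2}$: both sides equal $-z\bar w F'(z\bar w)$. The defining function $|w|^2-1$ of $D$ is radial and therefore annihilated by $\ct_w$, so Proposition \ref{P:IntByParts} applies on $\Omega=D$ with no boundary contribution. For $f\in C^\infty(\overline D)$ and $z\in D$ fixed, $w\mapsto B_D(z,w)$ is smooth on $\overline D$, and integration by parts yields
\[
\ct_z\bm{B}f(z)=-\int_D \ct_w B_D(z,w)\,f(w)\,dV(w)=\int_D B_D(z,w)\,\ct_w f(w)\,dV(w)=\bm{B}(\ct_w f)(z).
\]
Iterating gives $\ct_z^k\bm{B}f=\bm{B}(\ct_w^k f)$ on $D$ for smooth $f$, and this extends to arbitrary $f\in L^p_k(D)$ by density of $C^\infty(\overline D)$ in $L^p_k(D)$ together with the $L^p$-continuity of $\bm{B}$ supplied by Corollary \ref{C:LpBergmanDisc}.

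With the commutation in hand, Corollary \ref{C:LpBergmanDisc} gives
\[
\|\ct_z^k\bm{B}f\|_{L^p(D)}=\|\bm{B}(\ct_w^k f)\|_{L^p(D)}\lesssim\|\ct_w^k f\|_{L^p(D)}\lesssim\|f\|_{L^p_k(D)}.
\]
To recover the full $L^p_k$ norm of the holomorphic function $\bm{B}f$, I split $D=\{|z|<\tfrac12\}\cup D_{1/4}$. On $D_{1/4}$, the footnoted norm equivalence for holomorphic functions gives $\|\bm{B}f\|_{L^p_k(D_{1/4})}\lesssim\|\ct_z^k\bm{B}f\|_{L^p(D_{1/4})}$, which is controlled by the previous display. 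On $\{|z|<\tfrac12\}$, interior Cauchy estimates applied on balls of radius $\tfrac14$ bound all derivatives of order $\le k$ of the holomorphic function $\bm{B}f$ in terms of $\|\bm{B}f\|_{L^p(D)}\lesssim\|f\|_{L^p(D)}\le\|f\|_{L^p_k(D)}$. Summing the two regions yields the claimed boundedness.

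The conceptual heart of the argument is the commutation $\ct_z^k\bm{B}f=\bm{B}(\ct_w^k f)$: it rests entirely on $\ct_w$ annihilating the boundary defining function of $D$, so that no boundary term appears in the integration by parts. This is precisely the mechanism whose generalization to partially radial domains will be needed for Theorem \ref{T:Main2} on $\h_1$, where integration by parts must be performed on discs and annuli cut from $\h_1$, as the introduction previews. The remaining ingredients---density, $L^p_0$-boundedness, and interior Cauchy estimates for holomorphic functions---are routine.
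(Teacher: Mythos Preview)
Your proof is correct and takes a genuinely different route from the paper's. The paper estimates $\partial_z^k \bm{B}_D f$ directly: writing $\partial_z^k B_D = z^{-k}\bar w^k\partial_{\bar w}^k B_D$ introduces a singular factor $z^{-k}$, which the paper neutralizes by replacing $B_D$ with the truncated kernel $K_k(z,w)=\pi^{-2}\bigl[(k+1)(z\bar w)^k-k(z\bar w)^{k+1}\bigr](1-z\bar w)^{-2}$ satisfying $|K_k|\lesssim|z|^k|w|^k|1-z\bar w|^{-2}$; after integration by parts (via $\ct_w^k$ acting on $K_k$) the extra $|z|^k$ cancels $z^{-k}$ and one lands back on the absolute-value operator of Corollary~\ref{C:LpBergmanDisc}. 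You instead work with $\ct_z^k$ from the start, obtain the clean commutation $\ct_z^k\bm{B}_D=\bm{B}_D\ct_w^k$ with no singular prefactor at all, and only afterward recover $\partial_z^k$ control via the footnoted equivalence plus interior Cauchy estimates.

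Your route is slicker on the disc. The paper's route, however, is a deliberate rehearsal for Section~\ref{S:PositiveMapping}: on $\h_{1/n}$ the $z_1$-derivative analysis genuinely requires subtracting off a piece of the Bergman kernel (the kernel $K(z,w)$ in \eqref{D:K(z,w)ThisHartogs}) to manufacture an extra factor of $|z_1|$, because $B_{1/n}$ is not a function of $z_1\bar w_1$ alone and a bare commutation would not yield a kernel with adequate decay. So the modified-kernel device you bypass here is precisely the mechanism the paper needs later. One minor point: the footnoted inequality $\|g\|_{L^p_k(D_\delta)}\lesssim\|\ct^k g\|_{L^p(D_\delta)}$, read literally, fails for constants; the honest version involves $\sum_{j\le k}\|\ct^j g\|_{L^p(D_\delta)}$. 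Your argument already supplies all of these terms (since $\ct_z^j\bm{B}_D f=\bm{B}_D(\ct_w^j f)$ for every $j\le k$), so no repair is needed.
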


\begin{proof}
Fix $k,p$ and let $f\in L^p_k(D)$. Since $\bm{B}_{D}f\in\co(\Omega)$, only holomorphic derivatives need to be estimated.  For $z \ne 0$,

\begin{align}
\frac{\dee^k}{\dee z^k}\bm{B}_{D}f(z)  &= \frac{\dee^k}{\dee z^k} \int_D B_D(z,w)f(w)\,dV(w)  \notag \\ 
&= \int_{D} \frac{\dee^k}{\dee z^k} \big( B_D(z,w)\big) f(w)\,dV(w) \notag \\ 
&= \frac{1}{z^k}\int_{D} \bar{w}^k\frac{\dee^k}{\dee \bar{w}^k} \big( B_D(z,w)\big) f(w)\,dV(w). \label{E:WBarDerivative}
\end{align}
The last equality follows because $B_D(z,w)$ can be viewed as a function of the variable $s = z\bar{w}$.  Define a new kernel $K_k(z,w)$, obtained by subtracting away the $(k-1)$-Taylor approximation of $B_D(z,w)$ in the $s$ variable,  i.e.,
\begin{align}
K_k(z,w) := \frac{1}{\pi^2}\left[\frac{1}{(1-s)^2} - \sum_{j=0}^{k-1} (j+1) s^j \right] &= \frac{1}{\pi^2}\frac{\dee}{\dee s}\left[\sum_{j=k}^{\infty} s^{j+1} \right] \notag \\ 
&= \frac{(k+1)s^k-k s^{k+1}}{(1-s)^2} \label{D:K_k(z,w)Disc}.
\end{align}
Since $K_k(z,w)$ and $B_D(z,w)$  differ by terms annihilated by $\frac{\dee^k}{\dee \bar{w}^k}$  and $K_k(z,w)$ is anti-holomorphic in $w$,
\begin{align*}
\eqref{E:WBarDerivative} &= \frac{1}{z^k}\int_{D} \bar{w}^k\frac{\dee^k}{\dee \bar{w}^k} \big( K_k(z,w)\big) f(w)\,dV(w) \notag \\
&= \frac{1}{z^k}\int_{D} \ct_w^k \big( K_k(z,w)\big) f(w)\,dV(w),\notag \\
&= \frac{(-1)^k}{z^k}\int_{D}  K_k(z,w) \ct_w^k f(w)\,dV(w). 
\end{align*}
The last equality follows from Proposition \ref{P:IntByParts}.

The modified kernel $K_k(z,w)$ satisfies a stronger estimate than $B_D(z,w)$.  Indeed, equation \eqref{D:K_k(z,w)Disc} shows
\begin{equation*}
\left|K_k(z,w)\right| \lesssim \frac{|z|^k|w|^k}{|1-z\bar{w}|^2},
\end{equation*}
for a constant independent of $z,w\in D$. This can be used to counteract the factor $\frac 1{z^k}$ appearing in \eqref{E:WBarDerivative}.  Thus
\begin{align}
\left|\frac{\dee^k}{\dee z^k}\bm{B}_{D}f(z)\right| \lesssim \int_D \frac{|w|^k}{|1-z\bar{w}|^2} \ct_w^k f(w)\,dV(w) &\le \int_D \frac{1}{|1-z\bar{w}|^2} \ct_w^k f(w)\,dV(w) \notag \\
&\approx \int_D |B_D(z,w)|\, \ct_w^k f(w)\,dV(w). \label{E:AbsBergTwkf}
\end{align}
Since $\ct_w^k f \in L^p(D)$, Corollary \ref{C:LpBergmanDisc} says that \eqref{E:AbsBergTwkf} defines an $L^p(D)$ function.  This implies $\frac{\dee^k}{\dee z^k}\bm{B}_{D}f(z) \in L^p(D)$.  

For any positive integer $l \le k$, the same argument -- but for the modified  kernel $K_l (z,w)$ -- shows $\frac{\dee^l}{\dee z^l}\bm{B}_{D}f(z) \in L^p(D)$.  Thus $\bm{B}_Df \in L^p_k(D).$
\end{proof}

\section{Sobolev irregularity}\label{S:NegativeMapping}

The starting point is the characterization of $L^p_0$ boundedness of the Bergman projection on
$\h_\gamma$.

\begin{theorem}[\cite{EdhMcN17}]\label{T:JGALpBoundedness}
Let $\h_\gamma$ be defined in  \eqref{D:GenHartogs}, $\bm{B}$ denote the Bergman projection on $\h_\gamma$, and $1<p<\infty$.
\begin{itemize}
\item[(1)] Let $\gamma=\frac{m}{n} \in \Q^+$, with $\gcd(m,n) = 1$.  

Then $\bm{B}: L^p\left(\h_\gamma\right) \to L^p\left(\h_\gamma\right)$ is bounded if and only if $p \in \left(\frac{2m+2n}{m+n+1}, \frac{2m+2n}{m+n-1} \right)$.
\item[(2)] Let $\gamma > 0$ be irrational.  

Then $\bm{B}: L^p(\h_{\gamma}) \to L^p(\h_{\gamma})$ is bounded if and only if $p=2$.
\end{itemize}
\end{theorem}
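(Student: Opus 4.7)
The plan is to analyze the Bergman kernel of $\h_\gamma$ explicitly enough to apply a Schur-type test for the positive direction, and to produce sharp monomial test functions for the converse. For $\gamma=m/n$ with $\gcd(m,n)=1$, the central tool is the proper holomorphic map $\Phi:\D\times\D^\ast\to\h_\gamma$, $\Phi(w_1,w_2)=(w_1w_2^n,w_2^m)$, which is an $m$-to-$1$ branched covering with holomorphic Jacobian $u_\Phi(w)=m w_2^{m+n-1}$ and cyclic deck group $\Z/m\Z$. Bell's transformation law expresses $B_{\h_\gamma}\circ(\Phi\times\overline\Phi)$ as a finite symmetrization over the deck group of the product kernel $B_D(w_1,\zeta_1)B_{\D^\ast}(w_2,\zeta_2)$, divided by $u_\Phi(w)\overline{u_\Phi(\zeta)}$; summing this symmetrization produces a closed-form rational expression for $B_{\h_\gamma}(z,\zeta)$. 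Equivalently, one can decompose $L^2\cap\co(\h_\gamma)$ into its orthogonal monomial basis indexed by the lattice $S_\gamma=\{(a,b)\in\Z_{\geq 0}\times\Z:m(b+1)+n(a+1)>0\}$, compute normalization constants, and sum the resulting series.

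To prove $L^p$ boundedness on the stated range I would apply the generalized Schur test of Lemma \ref{L:GeneralSchur} with an auxiliary function built from the two defining expressions for $\h_\gamma$, e.g.\ $h(z)=(1-|z_2|^2)^{s_1}(|z_2|^2-|z_1|^{2\gamma})^{s_2}$ for suitable exponents $s_1,s_2>0$. After the change of variables $\Phi$, verifying the Schur estimates \eqref{E:Schur1} and \eqref{E:Schur2} reduces to a Forelli-Rudin calculation on $\D\times\D^\ast$, where Lemma \ref{L:IntegralEstimateDisc} applies in each disc factor, with the Jacobian weight $|w_2|^{2(m+n-1)}$ absorbed into the factor $|w|^{-A}$ permitted in that lemma. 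Careful bookkeeping of the exponents yields boundedness on exactly the interval $(\tfrac{2m+2n}{m+n+1},\tfrac{2m+2n}{m+n-1})$, symmetric about $p=2$, with endpoints controlled by the integers $m+n\pm 1$ arising from the Jacobian.

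Sharpness in the rational case I would obtain by testing on monomials $z_1^a z_2^b$ with $(a,b)\in S_\gamma$ approaching the critical boundary $m(b+1)+n(a+1)=0$: since the projection acts on such a monomial by a positive multiple of itself (and by zero when $(a,b)\notin S_\gamma$), one can compute both $\|z_1^az_2^b\|_{L^p(\h_\gamma)}$ and $\|\bm{B}(z_1^az_2^b)\|_{L^p(\h_\gamma)}$ in closed form and show the ratio diverges along a suitable sequence once $p$ exits the interval. For irrational $\gamma$, the critical line is itself irrational and misses $\Z^2$, but by density of $\{(a+1)\gamma\bmod 1\}$ the set $S_\gamma$ contains pairs $(a,b)$ with $(b+1)+(a+1)/\gamma$ arbitrarily close to $0^+$; choosing such sequences produces, for every $p\neq 2$, a family of holomorphic $L^p$ test functions whose projections lie outside $L^p$, forcing the range to collapse to $\{2\}$.

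The main obstacle is the precise bookkeeping needed to pin the endpoints $\tfrac{2m+2n}{m+n\pm 1}$ exactly, rather than up to an epsilon: one must synchronize the exponent windows allowed by Lemma \ref{L:GeneralSchur} and Lemma \ref{L:IntegralEstimateDisc} after incorporating the Jacobian weight, and then verify that the monomial test functions realize this same threshold from both sides. The irrational case presents an additional difficulty: no algebraic proper map to a product domain is available, so the analysis must be carried out intrinsically via the monomial expansion, and one must quantitatively exploit Diophantine approximation of $-1/\gamma$ to see the collapse to $p=2$.
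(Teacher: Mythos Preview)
Your approach to the boundedness half is essentially the one taken in \cite{EdhMcN17} and recapitulated here in Lemma~\ref{T:TypeCDOperators}: a Schur test against a weight built from the boundary-defining expressions, reduced via the covering map to Forelli--Rudin type integrals on the disc. One technical point: the auxiliary function actually used is $h(w)=|w_2|^R(|w_2|^{2n}-|w_1|^{2m})(1-|w_2|^2)$, with a single exponent $\epsilon$ and the extra factor $|w_2|^R$; the parameter $R$ is what gives enough room to make the $\epsilon$-windows in Lemma~\ref{L:GeneralSchur} match up and hit the sharp endpoints. Your weight $(1-|z_2|^2)^{s_1}(|z_2|^{2}-|z_1|^{2\gamma})^{s_2}$ without this factor does not quite close, so this is not merely bookkeeping.

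There is, however, a genuine gap in your sharpness argument. You propose testing on \emph{holomorphic} monomials $z_1^a z_2^b$ with $(a,b)\in S_\gamma$. But for any such monomial the projection is the identity: $\bm{B}(z_1^az_2^b)=z_1^az_2^b$, so the ratio $\|\bm{B}f\|_{L^p}/\|f\|_{L^p}$ is identically $1$ and can never diverge. (Your parenthetical ``by zero when $(a,b)\notin S_\gamma$'' is also not correct for the integral extension of $\bm{B}$.) The same objection applies verbatim to your irrational argument: holomorphic $L^p$ test functions with $p>2$ lie in $L^2$ on the bounded domain $\h_\gamma$, hence are fixed by $\bm{B}$.

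The device that actually works, sketched in the paper just after the statement of Theorem~\ref{T:JGALpBoundedness}, is to use \emph{non-holomorphic} test functions $f(z)=z_1^{\beta_1}\bar z_2^{\,\beta_2}$ with $(\beta_1,-\beta_2)$ on the extremal ray $\ell(\h_{m/n},L^2)$. Lemma~\ref{P:EasyMonomials} computes $\bm{B}f=C\,z_1^{\beta_1}z_2^{-\beta_2}$, which by Lemma~\ref{L:(p,m/n)-allowable MultiIndices and Norm} fails to lie in $L^p$ once $p\geq\rho(m,n)$, while $f$ itself is bounded and hence in every $L^p$. Duality then handles $p\leq\lambda(m,n)$. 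For irrational $\gamma$ the same mechanism is used: one needs the $\bar z_2$ in the input to flip the sign of the $z_2$-exponent under $\bm{B}$, and then Diophantine approximation of $\gamma$ places $(\beta_1,-\beta_2)$ arbitrarily close to the (now irrational) critical line, forcing failure for every $p\neq 2$.
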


Let $\big(\lambda (m,n), \rho (m,n)\big)=  \left(\frac{2m+2n}{m+n+1}, \frac{2m+2n}{m+n-1} \right)$ denote the interval of $L^p$ boundedness in (1) above. When $\h_{m/n}$ is fixed, denote this also as $I^p_0$.

Some ingredients in the proof of Theorem \ref{T:JGALpBoundedness} are used to prove the irregularity statements in Theorem \ref{T:Main}. Only consider $\h_{m/n}$ and let $\bm{B}=\bm{B}_{\h_{m/n}}$. An index $(\alpha_1, \alpha_2)\in\Z^+\times\Z$ is $L^p$-{\it allowable} if
the monomial $z_1^{\alpha_1} \tilde z_2^{\alpha_2}\in L^p\left(\h_{m/n}\right)$, where $\tilde z_2$ is either $z_2$ or $\bar z_2$. This set can be characterized:

\begin{lemma}[\cite{EdhMcN17}, eq. (3.3)]\label{L:(p,m/n)-allowable MultiIndices and Norm}
Let $p \in [1,\infty)$.  The $L^p$-allowable indices are
\begin{equation*}\label{E:(p,m/n)-allowable MultiIndices}
%\cs(\h_{m/n},L^p) = \left\{ \alpha=(\alpha_1,\alpha_2) : \alpha_1 \ge 0, \ \ p\alpha_2+2+\frac nm (p\alpha_1+2)>0 \right\}.
\cs\left(\h_{m/n},L^p\right) = \left\{ \alpha=(\alpha_1,\alpha_2) : \alpha_1 \ge 0, \ \ n\alpha_1+m\alpha_2\ge \left\lfloor -\frac{2}{p}(m+n) +1 \right\rfloor \right\}.
\end{equation*}
\end{lemma}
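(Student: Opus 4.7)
My plan is a direct computation of $\int_{\h_{m/n}} |z_1^{\alpha_1} \tilde z_2^{\alpha_2}|^p\, dV$ in polar coordinates, followed by a translation of the resulting real inequality into the integer-valued floor condition. First I would write $z_j = r_j e^{i\theta_j}$, so that $\h_{m/n}$ becomes $\{(r_1, r_2, \theta_1, \theta_2) : 0 \le r_1 < r_2^{n/m},\ 0 < r_2 < 1,\ (\theta_1, \theta_2) \in [0,2\pi)^2\}$ with $dV = r_1 r_2\, dr_1\, dr_2\, d\theta_1\, d\theta_2$. Since $|z_1^{\alpha_1} \tilde z_2^{\alpha_2}|^p = r_1^{p\alpha_1} r_2^{p\alpha_2}$ is angle-independent, the angular integrations contribute only the factor $(2\pi)^2$, and $L^p$-allowability reduces to the finiteness of
\[
\int_0^1 \int_0^{r_2^{n/m}} r_1^{p\alpha_1+1}\, r_2^{p\alpha_2+1}\, dr_1\, dr_2.
\]

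Next I would integrate in $r_1$ first. The hypothesis $\alpha_1 \ge 0$ forces the inner exponent $p\alpha_1+1 \ge 1 > -1$, so the $r_1$-integral converges for every fixed $r_2>0$ and evaluates to a constant multiple of $r_2^{(n/m)(p\alpha_1+2)}$. Substituting this back, finiteness becomes the question whether
\[
\int_0^1 r_2^{(n/m)(p\alpha_1+2) + p\alpha_2 + 1}\, dr_2 < \infty,
\]
which holds exactly when the exponent exceeds $-1$. Clearing the factor of $m$ and rearranging yields the equivalent condition $p(n\alpha_1 + m\alpha_2) > -2(m+n)$, i.e., $n\alpha_1 + m\alpha_2 > -\tfrac{2}{p}(m+n)$. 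Note that the requirement $\alpha_1 \ge 0$ is genuinely needed because $\{z_1=0\} \cap \h_{m/n}$ is nonempty, so negative integer powers of $z_1$ would produce a non-integrable singularity independently of $\alpha_2$.

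The last step, and the one I expect to require the most care, is converting the strict real inequality into the stated floor condition. Since $n\alpha_1 + m\alpha_2 \in \Z$ and for any real $x$ and $k \in \Z$ one has $k > x \iff k \ge \lfloor x \rfloor + 1 = \lfloor x+1\rfloor$, I obtain
\[
n\alpha_1 + m\alpha_2 \ge \left\lfloor -\tfrac{2}{p}(m+n) + 1 \right\rfloor,
\]
which is precisely the claimed description of $\cs(\h_{m/n}, L^p)$. The only subtlety is the edge case when $-\tfrac{2}{p}(m+n)$ happens to be an integer (so that the strict and non-strict versions of the real inequality differ), but the identity $\lfloor x \rfloor + 1 = \lfloor x+1\rfloor$ holds uniformly in $x$, giving a single characterization valid in every regime. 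Note that the hypothesis $\gcd(m,n)=1$ plays no role in the integrability computation itself; it only serves to pin down the unique normalized exponents $m,n$ appearing in the formula.
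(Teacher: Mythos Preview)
Your argument is correct. The paper does not actually prove this lemma here---it is quoted from \cite{EdhMcN17}---so there is no in-paper proof to compare against. That said, the exact polar-coordinate computation you carry out appears verbatim in the paper a few lines later, in the proof of Theorem~\ref{T:DerivsJLBerg} (see the display leading to \eqref{E:IntApprox1}), so your approach is precisely the one the authors use when they need this calculation explicitly. Your handling of the floor conversion via $\lfloor x\rfloor+1=\lfloor x+1\rfloor$ is clean and covers the integer edge case without fuss.
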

\noindent See also Lemma 4.4 in \cite{ChaEdhMcN18}.
Here $\lfloor x\rfloor =$ the greatest integer $\leq x$. In particular, the $L^2$ monomials are
\begin{equation}\label{E:L2mono}
\cs\left(\h_{m/n},L^2\right)= \left\{(\alpha_{1},\alpha_{2}): \alpha_{1} \ge 0,\ n\alpha_1 + m\alpha_{2} \ge -m -n +1 \right\}.
\end{equation}
As notation for the ray bounding the sets $\cs\left(\h_{m/n},L^p\right)$, let 
\begin{equation*}
\ell\left(\h_{m/n},L^p\right) = \left\{(x,y)\in\R^2: x\geq 0, \,\, \ nx+my= \left\lfloor -\frac{2}{p}(m+n) +1 \right\rfloor \right\}.
\end{equation*}
A consequence of  orthogonality is also essential.

\begin{lemma}[\cite{EdhMcN17}, Proposition 5.1]\label{P:EasyMonomials}
If both $(\beta_1,\beta_2), (\beta_1, -\beta_2)\in\cs\left(\h_{m/n},L^2\right)$, then 
\begin{equation*}
\bm{B}\left(z_1^{\beta_1}\, \bar z_2^{\,\beta_2}\right) = C\, z_1^{\beta_1}\, z_2^{-\beta_2},
\end{equation*}
for a constant $C>0$.
\end{lemma}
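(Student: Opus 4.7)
The plan is to exploit the two-torus symmetry of $\h_{m/n}$ to reduce the projection to a one-line angular orthogonality computation. Since $\h_{m/n}$ is a Reinhardt domain invariant under $(z_1,z_2)\mapsto (e^{i\theta_1}z_1,\, e^{i\theta_2}z_2)$, any two (possibly Laurent) monomials $z_1^{\alpha_1}z_2^{\alpha_2}$ and $z_1^{\alpha_1'}z_2^{\alpha_2'}$ with distinct bidegrees are mutually orthogonal in $L^2(\h_{m/n})$. A Laurent-in-$z_2$ and Taylor-in-$z_1$ expansion (valid since $z_2\ne 0$ on the Reinhardt domain $\h_{m/n}$, while $z_1=0$ is permitted) then shows that
\begin{equation*}
\Big\{ \, z_1^{\alpha_1}z_2^{\alpha_2} \;:\; (\alpha_1,\alpha_2)\in \cs\!\left(\h_{m/n},L^2\right) \, \Big\}
\end{equation*}
is a complete orthogonal basis for $A^2(\h_{m/n}) := \co(\h_{m/n})\cap L^2(\h_{m/n})$.

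With this in hand I expand
\begin{equation*}
\bm{B}\!\left(z_1^{\beta_1}\bar{z}_2^{\,\beta_2}\right) \;=\; \sum_{\alpha} c_\alpha \, z_1^{\alpha_1}z_2^{\alpha_2}, \qquad c_\alpha \;=\; \frac{\langle z_1^{\beta_1}\bar{z}_2^{\,\beta_2},\, z_1^{\alpha_1}z_2^{\alpha_2}\rangle}{\|z_1^{\alpha_1}z_2^{\alpha_2}\|_{L^2}^2},
\end{equation*}
where the hypothesis $(\beta_1,\beta_2)\in\cs(\h_{m/n},L^2)$ makes $z_1^{\beta_1}\bar{z}_2^{\,\beta_2}\in L^2(\h_{m/n})$, so the projection is defined. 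Introducing polar coordinates $z_j=r_j e^{i\theta_j}$ and isolating the angular factors, the numerator contains
\begin{equation*}
\int_0^{2\pi}\!\!\int_0^{2\pi} e^{\,i(\beta_1-\alpha_1)\theta_1}\, e^{-i(\beta_2+\alpha_2)\theta_2}\, d\theta_1\, d\theta_2,
\end{equation*}
which vanishes unless $\alpha_1=\beta_1$ and $\alpha_2=-\beta_2$. The hypothesis $(\beta_1,-\beta_2)\in \cs(\h_{m/n},L^2)$ is precisely what ensures this surviving basis element actually belongs to $A^2(\h_{m/n})$. Hence only one term survives and $\bm{B}(z_1^{\beta_1}\bar z_2^{\,\beta_2})=C\, z_1^{\beta_1} z_2^{-\beta_2}$.

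To finish I verify $C>0$. Using $\overline{z_2^{-\beta_2}}=\bar{z}_2^{-\beta_2}$, the surviving inner product is
\begin{equation*}
\int_{\h_{m/n}} z_1^{\beta_1}\bar{z}_1^{\,\beta_1}\, \bar{z}_2^{\,\beta_2}\,\bar{z}_2^{-\beta_2}\, dV \;=\; \int_{\h_{m/n}} |z_1|^{2\beta_1}\, dV,
\end{equation*}
which is finite by Cauchy--Schwarz applied to the two $L^2$-functions from the hypotheses, and strictly positive since the integrand is positive a.e.\ on the open set $\h_{m/n}$. Dividing by $\|z_1^{\beta_1}z_2^{-\beta_2}\|_{L^2}^2>0$ gives $C>0$. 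The only conceptual care needed is bookkeeping signs when $\beta_2<0$ (so that $\bar{z}_2^{\,\beta_2}$ denotes a true Laurent monomial on $\h_{m/n}\subset\{z_2\ne 0\}$), but this is routine and is not a real obstacle; the substantive input is the orthogonal-basis structure of $A^2(\h_{m/n})$ provided by the Reinhardt symmetry.
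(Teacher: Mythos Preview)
Your argument is correct. The paper does not actually prove this lemma here; it is quoted verbatim from \cite{EdhMcN17}, Proposition~5.1, so there is no in-paper proof to compare against. Your approach---using the Reinhardt symmetry of $\h_{m/n}$ to obtain the complete orthogonal monomial basis for $A^2(\h_{m/n})$, then reading off the single surviving Fourier coefficient via angular orthogonality---is precisely the standard proof and is essentially what appears in the cited reference. The positivity check via Cauchy--Schwarz is fine (and in fact $\int_{\h_{m/n}}|z_1|^{2\beta_1}\,dV<\infty$ follows directly once $\beta_1\ge 0$, which is part of the definition of $\cs$).
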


The unboundedness statements in Theorem \ref{T:JGALpBoundedness} for $p\notin I^p_0$ (defined above) are proved as follows. Let $p\geq \rho(m,n)$. 
\begin{description}
\item[(A)] Choose $(\beta_1,\beta_2)\in\Z^+\times\Z^+$  with $(\beta_1,-\beta_2)\in \ell\left(\h_{m/n},L^2\right)$.
\item[(B)] Lemma \ref{L:(p,m/n)-allowable MultiIndices and Norm} implies $z_1^{\beta_1} z_2^{-\beta_2}\notin \cs\left(\h_{m/n},L^{\rho(m,n)}\right)$.
\item[(C)] Let $f(z_1, z_2) =: z_1^{\beta_1}\bar z_2^{\beta_2}$; Lemma \ref{P:EasyMonomials} says $\bm{B}f = C z_1^{\beta_1} z_2^{-\beta_2}$. Thus $\|f\|_{L^p} <\infty$, while
$\left\|\bm{B}f\right\|_{L^p}=\infty$. 
\end{description}
\noindent Duality implies the same conclusion if $p\leq\lambda(m,n)$.

The heart of this argument works on Sobolev spaces. But one piece is not transferable: if $j,l\in\Z^+$, the operator $\frac{\dee^{j+l}}{\dee z_1^j \dee z_2^l} \circ \bm{B}$ is not self-adjoint in the
$L^2$ inner product. As a result, knowing that $\bm{B}$ is unbounded on $L^p_k$ does not automatically imply that $\bm{B}$ is unbounded on $L^q_k$, where $\frac 1p +\frac 1q =1$. 
Whether this fact actually allows {\it regularity} of  $\bm{B}$ on $L^q_k$ for small $q$, e.g.,  $q < \lambda(m,n)$, in cases where $\bm{B}$ is unbounded on $L^p_k$
is uncertain.
\medskip

However for large $p$, $L^p_k$ regularity certainly does not hold:

\begin{theorem}\label{T:DerivsJLBerg}
Let $\gamma = \frac mn \in \Q^+$, and $j,l$ non-negative integers.  The operator $\frac{\dee^{j+l}}{\dee z_1^j \dee z_2^l} \circ \bm{B}$ fails to map $C^{\infty}(\overline{\h_{\gamma}}) \to L^p(\h_\gamma)$ for any 
\begin{equation}\label{E:LpBound_jl}
p \ge \frac{2m+2n}{m(l+1) + n(j+1)-1}.
\end{equation}
\end{theorem}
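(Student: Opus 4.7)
The plan is to adapt the scheme (A)--(C) used in the unboundedness part of Theorem~\ref{T:JGALpBoundedness}, replacing $\bm{B}$ by $\frac{\dee^{j+l}}{\dee z_1^j \dee z_2^l}\circ\bm{B}$ throughout. The decisive ingredient is Lemma~\ref{P:EasyMonomials}, which evaluates $\bm{B}$ in closed form on any monomial $z_1^{\beta_1}\bar z_2^{\beta_2}$ with $(\beta_1,\pm\beta_2)\in\cs(\h_{m/n},L^2)$. Every such monomial is a polynomial, hence automatically smooth on $\overline{\h_\gamma}$; so a $C^\infty(\overline{\h_\gamma})$ counterexample is built simply by choosing the exponents $(\beta_1,\beta_2)$ appropriately.

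Concretely, first I would select nonnegative integers $\beta_1,\beta_2$ satisfying $n\beta_1-m\beta_2=-m-n+1$, which places $(\beta_1,-\beta_2)$ on the extremal ray $\ell(\h_{m/n},L^2)$. Such solutions exist since $\gcd(m,n)=1$, and the shift $(\beta_1,\beta_2)\mapsto(\beta_1+m,\beta_2+n)$ preserves the equation, so I can further arrange $\beta_1\geq j$ and $\beta_2\geq 1$. Then both $(\beta_1,\beta_2)$ and $(\beta_1,-\beta_2)$ lie in $\cs(\h_{m/n},L^2)$, and Lemma~\ref{P:EasyMonomials} yields
\[
\bm{B}\bigl(z_1^{\beta_1}\bar z_2^{\beta_2}\bigr) = C\,z_1^{\beta_1}z_2^{-\beta_2}, \qquad C>0.
\]
Since $\beta_1\geq j$ and $\beta_2\geq 1$, differentiating produces a nonzero constant multiple of $z_1^{\beta_1-j}z_2^{-\beta_2-l}$.

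Finally I would invoke Lemma~\ref{L:(p,m/n)-allowable MultiIndices and Norm} applied to the index $(\beta_1-j,-\beta_2-l)$: membership in $L^p(\h_\gamma)$ requires
\[
n(\beta_1-j)-m(\beta_2+l)\;\geq\;\left\lfloor -\tfrac{2}{p}(m+n)+1\right\rfloor,
\]
which, after substituting the Step-1 relation for $n\beta_1-m\beta_2$, collapses to $1-m(l+1)-n(j+1)\geq \lfloor -\tfrac{2}{p}(m+n)+1\rfloor$. A direct manipulation --- using that both sides of the relevant strict inequality are integers, so one can trade $<$ for $\leq$ after adding $1$ --- shows that the hypothesis \eqref{E:LpBound_jl} is equivalent to the opposite inequality $\lfloor -\tfrac{2}{p}(m+n)+1\rfloor\geq 2-m(l+1)-n(j+1)$, ruling out $L^p$-membership. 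The only delicate point is this floor-function arithmetic, which is what confirms that the threshold in \eqref{E:LpBound_jl} is sharp; everything else is a mechanical application of the lemmas quoted from \cite{EdhMcN17}.
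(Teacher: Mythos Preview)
Your proposal is correct and follows essentially the same strategy as the paper: choose $(\beta_1,\beta_2)$ with $(\beta_1,-\beta_2)$ on the extremal ray $\ell(\h_{m/n},L^2)$ and $\beta_1\ge j$, apply Lemma~\ref{P:EasyMonomials} to the test function $z_1^{\beta_1}\bar z_2^{\beta_2}\in C^\infty(\overline{\h_\gamma})$, differentiate, and verify that the resulting monomial $z_1^{\beta_1-j}z_2^{-\beta_2-l}$ lies outside $L^p$ for $p$ in the stated range. The only cosmetic difference is that the paper checks the $L^p$-failure by a direct polar-coordinate integral (finding the exponent on $r_2$ and reading off the convergence threshold), whereas you route the same check through Lemma~\ref{L:(p,m/n)-allowable MultiIndices and Norm} and the accompanying floor-function inequality; both yield the identical bound~\eqref{E:LpBound_jl}.
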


\begin{proof}
Starting from equation \eqref{E:L2mono}, choose $\beta = (\beta_1,\beta_2) \in \Z^+ \times \Z^+$ with $\beta_1\ge j$ and $(\beta_1,-\beta_2)\in \ell\left(\h_{m/n},L^2\right)$, i.e., $n\beta_1-m\beta_2 = 1-m-n$.  Clearly
\begin{equation}
\frac{\dee^{j+l}}{\dee z_1^j \dee z_2^l}\Big(z_1^{\beta_1} z_2^{-\beta_2}\Big) \approx z_1^{\beta_1-j} z_2^{-\beta_2-l}.
\end{equation}
To see when this is an $L^p$ function, compute
\begin{align}
\int_{\h_\gamma} |z_1^{\beta_1-j}z_2^{-\beta_2-l}|^p\,dV &= 4\pi^2 \int_{h_\gamma} r_1^{p\beta_1-pj+1} r_2^{-p\beta_2-pl+1}\,dr_1 dr_2 \notag\\
&= 4\pi^2 \int_0^1 r_2^{-p\beta_2-pl+1} \int_0^{r_2^{n/m}} r_1^{p\beta_1-pj+1} \,dr_1 dr_2 \notag\\
&\approx \int_0^1 r_2^{-p\beta_2-pl+1+pn\beta_1/m - pnj/m +2n/m}\,dr_2 \label{E:IntApprox1},
\end{align}
where $h_\gamma$ is the Reinhardt shadow of $\h_\gamma$, i.e., $h_\gamma = \h_\gamma \cap (\R^{\ge0} \times \R^{\ge0})$.  The integral in \eqref{E:IntApprox1} is finite if and only if the exponent on the integrand $> -1$.  This is equivalent to saying
\begin{align}\label{E:UpperBoundBjl}
p < \frac{2m+2n}{m(l+1)+n(j+1)-1}.
\end{align}

Now consider the monomial $f(z) = z_1^{\beta_1}\bar{z}_2^{\beta_2} \in C^\infty(\overline{\h_\gamma})$.  Lemma \ref{P:EasyMonomials} says $\bm{B}f = C z_1^{\beta_1} z_2^{-\beta_2}$. Thus $\|f\|_{L^p} <\infty$, while
$\left\|\frac{\dee^{j+l}}{\dee z_1^j\dee z_2^l}\circ\bm{B}f\right\|_{L^p}=\infty$ for those $p$ satisfying \eqref{E:LpBound_jl}.
\end{proof}

\begin{remark}\label{R:Duality}
Theorem \ref{T:DerivsJLBerg} recovers the $L^p_0$ unboundedness range given in Theorem \ref{T:JGALpBoundedness} part (1).  When $j=l=0$, the right hand side of \eqref{E:LpBound_jl} is simply $\rho(m,n)$.  Since $\bm{B}$ is self-adjoint, it must also be unbounded for $1<p<\lambda(m,n)$.
\end{remark}

In particular, Theorem \ref{T:DerivsJLBerg} implies Theorem \ref{T:Main} from the Introduction.

\begin{corollary}\label{C:L2SobFail}
Let $k\ge1$ be an integer and $\gamma = \frac mn \in \Q^+$.  The Bergman projection $\bm{B}$ fails to map $L^2_k(\h_\gamma) \to L^2_k(\h_\gamma)$.
\end{corollary}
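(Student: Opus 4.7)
The plan is to deduce Corollary \ref{C:L2SobFail} directly from Theorem \ref{T:DerivsJLBerg} at the single exponent $p=2$, by choosing a non-negative pair $(j,l)$ of minimal total order for which the hypothesis of that theorem is already active.

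First, I would simplify \eqref{E:LpBound_jl} at $p=2$. The inequality $2 \geq \frac{2m+2n}{m(l+1)+n(j+1)-1}$ rearranges to
\[
m\,l + n\,j \ \geq\ 1.
\]
For any $k \geq 1$, this is satisfied by $(j,l) = (0,1)$, which has $j+l = 1 \leq k$ and gives $ml + nj = m \geq 1$; symmetrically $(j,l)=(1,0)$ works.

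Next, Theorem \ref{T:DerivsJLBerg} applied with this pair produces a function $f \in C^\infty(\overline{\h_\gamma})$ --- concretely, the monomial $f = z_1^{\beta_1} \bar z_2^{\beta_2}$ constructed in its proof --- such that $\frac{\dee}{\dee z_2} \bm{B} f \notin L^2(\h_\gamma)$. Since $\h_\gamma$ is bounded, $C^\infty(\overline{\h_\gamma}) \subset L^2_k(\h_\gamma)$ for every $k$, so $f$ is a legitimate element of the input space. On the other hand, $\bm{B}f \notin L^2_1(\h_\gamma)$ forces $\bm{B}f \notin L^2_k(\h_\gamma)$ for every $k \geq 1$, using the trivial inclusion $L^2_k(\h_\gamma) \subseteq L^2_1(\h_\gamma)$. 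Thus the single function $f$ serves as a counterexample at every $k \geq 1$, and $\bm{B}$ fails to map $L^2_k(\h_\gamma)$ to itself.

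The substantive analytic content is entirely packaged inside Theorem \ref{T:DerivsJLBerg}, so there is no real obstacle beyond verifying the one-line inequality $ml+nj \geq 1$. The only conceptual observation is that $k = 1$ already destroys regularity at every higher $k$, which is why Corollary \ref{C:L2SobFail} is essentially a $k=1$ statement in disguise.
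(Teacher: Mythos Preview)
Your proof is correct and follows essentially the same route as the paper's: both deduce the corollary from Theorem~\ref{T:DerivsJLBerg} by checking that the arithmetic condition \eqref{E:LpBound_jl} is met at $p=2$ once $j+l\ge 1$. The only cosmetic difference is that the paper verifies $m(l+1)+n(j+1)-1\ge m+n$ directly for any $(j,l)$ with $j+l=k$, whereas you fix $(j,l)=(0,1)$ and then use the inclusion $L^2_k\subseteq L^2_1$; these are equivalent one-line observations.
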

\begin{proof}
If $k=j+l\ge1$, then $m(l+1) + n(j+1) - 1 > m+n$.
\end{proof}
\pagebreak

The notion of a {\it lattice point diagram} associated to the domains $\h_\gamma$ was introduced in \cite{EdhMcN17}. The diagrams record exponents of all monomials $z_1^{\alpha_1}z_2^{\alpha_2}\in \cs\left(\h_{m/n},L^p\right)$, as $p$ varies. These diagrams are thus Newton diagrams, but of the entire {\it space} $A^p(\h_\gamma)=\co(\h_\gamma)\cap L^p(\h_\gamma)$ rather than of an individual $f\in A^p(\h_\gamma)$. Several
lattice point diagrams succinctly illustrate Theorem \ref{T:DerivsJLBerg} and Corollary \ref{C:L2SobFail}.

%%%%%%IMAGE%%%%%%%
{\begin{center}
\begin{tikzpicture}

\filldraw [lightgray, domain=0:8]  (0,-2) -- (0,-1) -- (7,-8) -- (6,-8);
\filldraw [lightgray, domain=0:8]  (0,-2) -- (0,-1) -- (12,-7) -- (12,-8);
\filldraw [lightgray, domain=0:8]  (0,-4) -- (0,-2) -- (3,-8) -- (2,-8);
%\filldraw [lightgray, domain=0:8]  (0,-1) -- (12,-5) -- (12,-16/3) -- (0,-4/3);

\draw[-{latex}, thick] (0,0) -- (12,0) node[anchor=south] {$\alpha_1$};
\draw[-{latex}, thick] (0,0) -- (0,-8) node[anchor=east] {$\alpha_2$};

%\shadedraw[shading=radial, color=blue] (0,0) circle (4);

%\draw[-stealth, thin] (0,0) -- (8,-8) node[anchor=north] {$L^4$};
\draw[-stealth, thin] (0,-1) -- (7,-8) node[anchor=north] {$L^2$};
\draw[-stealth, thin, dashed] (0,-2) -- (6,-8) node[anchor=north] {$L^{4/3}$};

%\draw[-stealth, thin] (0,-0.5) -- (12,-6.5) node[anchor=west] {$L^3$};
\draw[-stealth, thin] (0,-1) -- (12,-7) node[anchor=west] {$L^2$};
\draw[-stealth, thin, dashed] (0,-1.5) -- (12,-7.5) node[anchor=west] {$L^{3/2}$};
\draw[-stealth, thin, dashed] (0,-2) -- (12,-8) node[anchor=west] {$L^{6/5}$};

%\draw[-stealth, thin] (0,-1) -- (3.5,-8) node[anchor=north] {$L^3$};
\draw[-stealth, thin] (0,-2) -- (3,-8) node[anchor=north] {$\,\,\,\,\,\,L^2$};
\draw[-stealth, thin, dashed] (0,-3) -- (2.5,-8) node[anchor=north] {$\,\,L^{3/2}$};
\draw[-stealth, thin, dashed] (0,-4) -- (2,-8) node[anchor=north] {$L^{6/5}\,\,\,\,\,$};
%\draw[-stealth, thin] (0,-3) -- (5/3,-8) node[anchor=east] {$\gamma = \frac13$};

%\draw[-stealth, thin] (0,-1) -- (12,-5) node[anchor=west] {$L^2$};
%\draw[-stealth, thin, dashed] (0,-4/3) -- (12,-16/3) node[anchor=west] {$L^{8/5}$};
%\draw[-stealth, thin, dashed] (0,-1.5) -- (12,-7.5) node[anchor=west] {$L^{3/2}$};

%\draw[-stealth, thin] (0,-4/3) -- (10,-8) node[anchor=west] {$\gamma = \frac32$};
%\draw[-stealth, thin] (0,-2) -- (4,-8) node[anchor=west] {$\gamma = \frac23$};

\draw[-stealth, thick] (2,-6) -- (1.1,-6) ;
\draw[-stealth, thick] (2,-6) -- (2,-6.9) ;

\draw[-stealth, thick] (5,-6) -- (4.1,-6) ;
\draw[-stealth, thick] (5,-6) -- (5,-6.9) ;

\draw[-stealth, thick] (10,-6) -- (9.1,-6) ;
\draw[-stealth, thick] (10,-6) -- (10,-6.9) ;

%\draw[-stealth, thick] (9,-4) -- (8.1,-4) ;
%\draw[-stealth, thick] (9,-4) -- (9,-4.9) ;

\draw[-stealth, thick] (10,-1) -- (9.1,-1) node[anchor=south] {$\dee_1$};
\draw[-stealth, thick] (10,-1) -- (10,-1.9) node[anchor=west] {$\dee_2$};

\filldraw[black] (0,0) circle (1.5pt) node[anchor=east] {(0,0)};
\filldraw[black] (1,0) circle (1.5pt) ;
\filldraw[black] (2,0) circle (1.5pt) ;
\filldraw[black] (3,0) circle (1.5pt) ;
\filldraw[black] (4,0) circle (1.5pt) ;
\filldraw[black] (5,0) circle (1.5pt) ;
\filldraw[black] (6,0) circle (1.5pt) ;
\filldraw[black] (7,0) circle (1.5pt) ;
\filldraw[black] (8,0) circle (1.5pt) ;
\filldraw[black] (9,0) circle (1.5pt) ;
\filldraw[black] (10,0) circle (1.5pt) ;
\filldraw[black] (11,0) circle (1.5pt) ;
%\filldraw[black] (12,0) circle (1.5pt) ;
%\filldraw[black] (13,0) circle (1.5pt) ;

\filldraw[black] (0,-1) circle (1.5pt) ;
\filldraw[black] (1,-1) circle (1.5pt) ;
\filldraw[black] (2,-1) circle (1.5pt) ;
\filldraw[black] (3,-1) circle (1.5pt) ;
\filldraw[black] (4,-1) circle (1.5pt) ;
\filldraw[black] (5,-1) circle (1.5pt) ;
\filldraw[black] (6,-1) circle (1.5pt) ;
\filldraw[black] (7,-1) circle (1.5pt) ;
\filldraw[black] (8,-1) circle (1.5pt) ;
\filldraw[black] (9,-1) circle (1.5pt) ;
\filldraw[black] (10,-1) circle (1.5pt) ;
\filldraw[black] (11,-1) circle (1.5pt) ;
%\filldraw[black] (12,-1) circle (1.5pt) ;
%\filldraw[black] (13,-1) circle (1.5pt) ;

\filldraw[black] (0,-2) circle (1.5pt) ;
\filldraw[black] (1,-2) circle (1.5pt) ;
\filldraw[black] (2,-2) circle (1.5pt) ;
\filldraw[black] (3,-2) circle (1.5pt) ;
\filldraw[black] (4,-2) circle (1.5pt) ;
\filldraw[black] (5,-2) circle (1.5pt) ;
\filldraw[black] (6,-2) circle (1.5pt) ;
\filldraw[black] (7,-2) circle (1.5pt) ;
\filldraw[black] (8,-2) circle (1.5pt) ;
\filldraw[black] (9,-2) circle (1.5pt) ;
\filldraw[black] (10,-2) circle (1.5pt) ;
\filldraw[black] (11,-2) circle (1.5pt) ;
%\filldraw[black] (12,-2) circle (1.5pt) ;
%\filldraw[black] (13,-2) circle (1.5pt) ;

\filldraw[black] (0,-3) circle (1.5pt) ;
\filldraw[black] (1,-3) circle (1.5pt) ;
\filldraw[black] (2,-3) circle (1.5pt) ;
\filldraw[black] (3,-3) circle (1.5pt) ;
\filldraw[black] (4,-3) circle (1.5pt) ;
\filldraw[black] (5,-3) circle (1.5pt) ;
\filldraw[black] (6,-3) circle (1.5pt) ;
\filldraw[black] (7,-3) circle (1.5pt) ;
\filldraw[black] (8,-3) circle (1.5pt) ;
\filldraw[black] (9,-3) circle (1.5pt) ;
\filldraw[black] (10,-3) circle (1.5pt) ;
\filldraw[black] (11,-3) circle (1.5pt) ;
%\filldraw[black] (12,-3) circle (1.5pt) ;
%\filldraw[black] (13,-3) circle (1.5pt) ;

\filldraw[black] (0,-4) circle (1.5pt) ;
\filldraw[black] (1,-4) circle (1.5pt) ;
\filldraw[black] (2,-4) circle (1.5pt) ;
\filldraw[black] (3,-4) circle (1.5pt) ;
\filldraw[black] (4,-4) circle (1.5pt) ;
\filldraw[black] (5,-4) circle (1.5pt) ;
\filldraw[black] (6,-4) circle (1.5pt) ;
\filldraw[black] (7,-4) circle (1.5pt) ;
\filldraw[black] (8,-4) circle (1.5pt) ;
\filldraw[black] (9,-4) circle (1.5pt) ;
\filldraw[black] (10,-4) circle (1.5pt) ;
\filldraw[black] (11,-4) circle (1.5pt) ;%node[anchor=north] {$\gamma = 3$};
%\filldraw[black] (12,-4) circle (1.5pt) ;
%\filldraw[black] (13,-4) circle (1.5pt) ;

\filldraw[black] (0,-5) circle (1.5pt) ;
\filldraw[black] (1,-5) circle (1.5pt) ;
\filldraw[black] (2,-5) circle (1.5pt) ;
\filldraw[black] (3,-5) circle (1.5pt) ;
\filldraw[black] (4,-5) circle (1.5pt) ;
\filldraw[black] (5,-5) circle (1.5pt) ;
\filldraw[black] (6,-5) circle (1.5pt) ;
\filldraw[black] (7,-5) circle (1.5pt) ;
\filldraw[black] (8,-5) circle (1.5pt) ;
\filldraw[black] (9,-5) circle (1.5pt) ;
\filldraw[black] (10,-5) circle (1.5pt);
\filldraw[black] (11,-5) circle (1.5pt) ;
%\filldraw[black] (12,-5) circle (1.5pt) ;
%\filldraw[black] (13,-5) circle (1.5pt) ;

\filldraw[black] (0,-6) circle (1.5pt) ;
\filldraw[black] (1,-6) circle (1.5pt) ;
\filldraw[black] (2,-6) circle (1.5pt) ;
\filldraw[black] (3,-6) circle (1.5pt) ;
\filldraw[black] (4,-6) circle (1.5pt) ;
\filldraw[black] (5,-6) circle (1.5pt) ;
\filldraw[black] (6,-6) circle (1.5pt) ;
\filldraw[black] (7,-6) circle (1.5pt) ;
\filldraw[black] (8,-6) circle (1.5pt) ;
\filldraw[black] (9,-6) circle (1.5pt) ;
\filldraw[black] (10,-6) circle (1.5pt) ;
\filldraw[black] (11,-6) circle (1.5pt) ;
%\filldraw[black] (12,-6) circle (1.5pt) ;
%\filldraw[black] (13,-6) circle (1.5pt) ;

\filldraw[black] (0,-7) circle (1.5pt) ;
\filldraw[black] (1,-7) circle (1.5pt) ;
\filldraw[black] (2,-7) circle (1.5pt) ;
\filldraw[black] (3,-7) circle (1.5pt) node[anchor=north] {\,\,\,\,\,\,\,$\gamma = \frac12$};
\filldraw[black] (4,-7) circle (1.5pt) ;
\filldraw[black] (5,-7) circle (1.5pt) ;
\filldraw[black] (6,-7) circle (1.5pt) ;
\filldraw[black] (7,-7) circle (1.5pt) node[anchor=north] {$\gamma = 1$};
\filldraw[black] (8,-7) circle (1.5pt) ;
\filldraw[black] (9,-7) circle (1.5pt) node[anchor=north] {\,\,\,\,\,\,\,\,\,$\gamma = 2$};
\filldraw[black] (10,-7) circle (1.5pt) ;
\filldraw[black] (11,-7) circle (1.5pt) ;
%\filldraw[black] (12,-7) circle (1.5pt) ;
%\filldraw[black] (13,-7) circle (1.5pt) ;
 
\end{tikzpicture}
\end{center}
}
%%%%%%IMAGE%%%%%%%

Three  lattice point diagrams on $\h_\gamma$ , corresponding to $\gamma= \frac mn = \frac 12, 1, 2$, are shown.  The indices $\alpha \in \cs\left(\h_{m/n}, L^2\right)$ are exactly those lattice points on and above the line labeled $L^2$ for the corresponding $\gamma$.  The dotted lines, labeled $L^{p}$, are lines parallel to their corresponding $L^2$ lines but passing through the lattice points in $\ell \left(\h_{m/n}, L^{p}\right)$. 
Any lattice point {\it strictly below} the dotted lines correspond to monomials $\notin L^{p}$ for the given $\h_{m/n}$.

Notice that (up to a constant)  $z_1$ derivatives of fourth quadrant monomials are represented by a shift left  and $z_2$ derivatives by a shift down in the lattice point diagram. These operations are labeled 
$\partial_1, \partial_2$ in the diagram. The content of Corollary \ref{C:L2SobFail} is easily seen in this lattice point diagram: monomials on the $L^2$ line are driven below to a corresponding $L^{p}$ line $(p<2)$ by a single application of
 $\partial_1$ or $\partial_2$.  The more precise Theorem \ref{T:DerivsJLBerg} may also be visualized in this way.

\begin{remark} The precise non-isotropic (in terms of derivatives) irregularity in Theorem  \ref{T:DerivsJLBerg} seems noteworthy. The two derivative operations $\partial_1, \partial_2$ are not symmetric with respect to how they drive monomials
out of the boundedness interval $I^p_0$, depending on whether $\gamma >1$ or $\gamma <1$. This is very clear in the diagrams: if $\gamma >1$ (a ``fat Hartogs triangle'' in the
terminology of \cite{Edh16}) more $\partial_1$ derivatives are allowed, while if $\gamma <1$ (a ``thin Hartogs triangle'')  more $\partial_2$ derivatives are allowed.
\end{remark}

\section{Sobolev regularity}\label{S:PositiveMapping}

A class of kernels on the domains $\h_{m/n}$,  containing the Bergman kernel $B_{{m/n}}(z,w)$ and its derivatives, can be analyzed via Lemma \ref{L:GeneralSchur}.  The next result generalizes  Proposition 4.2 of \cite{EdhMcN17}, which required $c=d$.

\begin{lemma}\label{T:TypeCDOperators} 
Let $K:\h_{m/n} \times \h_{m/n} \to \C$ be an integral kernel satisfying
\begin{equation}\label{E:KernelBound}
\left|K(z_1,z_2,w_1,w_2)\right| \lesssim \frac{|z_2|^c|w_2|^d}{|1-z_2\bar{w}_2|^2|z_2^n\bar{w}_2^n - z_1^m\bar{w}_1^m|^2},
\end{equation}
and let $\ck$ be the operator defined by
$\ck (f)(z) := \int_{\h_{m/n}} K(z,w)f(w)\,dV(w)$.

Suppose the following conditions on $c$ and $d$ hold:
\begin{align}\label{CDConditions}
c > 2n\left(1-\frac1m \right) - 2, \quad
d > 2n\left(1-\frac1m \right) - 2, \quad
c + d > 2n\left(2-\frac1m \right) - 2.
\end{align}

Then $\ck: L^p(\h_{m/n}) \to L^p(\h_{m/n})$ is bounded operator for all $p \in (1,\infty)$ satisfying
\begin{equation}\label{E:LpBoundCD}
\frac{2m+2n}{2m+2n +dm -2mn} <p< \frac{2m+2n}{2mn-cm}.
\end{equation}

\begin{remark}\label{R:CDRemark}
If the exponent $c \ge 2n$, the upper bound in \eqref{E:LpBoundCD} can be taken to be $\infty$.  This follows since $|z_2|^c \le |z_2|^{2n}$ for all $z=(z_1,z_2)\in \h_{m/n}$.  Similarly if $d \ge 2n$, the lower bound in \eqref{E:LpBoundCD} is $1$.

Note that the conditions in \eqref{CDConditions} are necessary to ensure the range of $p$ in \eqref{E:LpBoundCD} is a non-degenerate subinterval of $(1,\infty)$.
\end{remark}

\begin{proof}[Proof of Lemma \ref{T:TypeCDOperators}]
Apply Lemma \ref{L:GeneralSchur}, with $h(w) = |w_2|^R(|w_2|^{2n}-|w_1|^{2m})(1-|w_2|^2)$ as the auxiliary function. The parameters $R\ge0$ and $\epsilon\in(0,1)$ are numbers specified later in the proof.  It follows that
\begin{align}
\eqref{E:Schur1} &= \int_{\h_{m/n}} |K(z,w)|h(w)^{-\epsilon}\,dV(w) \notag \\ 
&\lesssim \int_{\h_{m/n}} \frac{|z_2|^c|w_2|^{d-R\epsilon} (|w_2|^{2n}-|w_1|^{2m})^{-\epsilon}(1-|w_2|^2)^{-\epsilon}}{|1-z_2\bar{w}_2|^2 |z_2^n\bar{w}_2^n - z_1^m\bar{w}_1^m|^2} dV(w)  \notag \\
&= \int_{D^*} \frac{|z_2|^c|w_2|^{d-R\epsilon} (1-|w_2|^2)^{-\epsilon}}{|1-z_2\bar{w}_2|^2} \left[ \int_W \frac{(|w_2|^{2n}-|w_1|^{2m})^{-\epsilon}}{|z_2^n\bar{w}_2^n - z_1^m\bar{w}_1^m|^2} dV(w_1) \right] dV(w_2) \label{E:SchurIntEst1},
\end{align}
where $D^*$ is the punctured unit disc and the integral in brackets is taken over the region $W = \{w_1 : |w_1| < |w_2|^{n/m} \}$.  Denote this inner integral by $I$.
\begin{align}
I &= \frac{1}{|z_2|^{2n}|w_2|^{2n+2n\epsilon}}\int_{W} \left(1-\left|\frac{w_1^m}{w_2^n}\right|^2\right)^{-\epsilon}\left|1 - \left(\frac{z_1^m}{z_2^n}\right)\overline{\left(\frac{w_1^m}{w_2^n}\right)} \right|^{-2}dV(w_1) \notag \\
&= \frac{|w_2|^{2n/m-2n-2n\epsilon}}{m|z_2|^{2n}}\int_D \frac{(1-|u|^2)^{-\epsilon}}{|1-z_1^m z_2^{-n}\bar{u}|^2}|u|^{2/m-2}dV(u) \label{E:SchurIntEst2}, 
\end{align}
after the $m$-to-$1$ integral transformation $u = \frac{w_1^m}{w_2^n}$. Lemma \ref{L:IntegralEstimateDisc} yields the estimate
\begin{align}
\eqref{E:SchurIntEst2} &\lesssim \frac{|w_2|^{2n/m-2n-2n\epsilon}}{|z_2|^{2n}} \left(1 -\left| \frac{z_1^m}{z_2^n}\right|^2 \right)^{-\epsilon} \notag \\
&=|z_2|^{2n\epsilon-2n} |w_2|^{2n/m-2n-2n\epsilon} \left(|z_2|^{2n}-|z_1|^{2m}\right)^{-\epsilon}. \label{E:SchurIntEst3}
\end{align}
Now insert \eqref{E:SchurIntEst3} into \eqref{E:SchurIntEst1}:
\begin{align*}
\eqref{E:SchurIntEst1} &\lesssim |z_2|^{c+2n\epsilon -2n} \left(|z_2|^{2n}-|z_1|^{2m}\right)^{-\epsilon} \int_{D^*} \frac{\left(1-|w_2|^2\right)^{-\epsilon}}{|1-z_2 \bar{w}_2|^2} |w_2|^{A}\,dV(w_2),
\end{align*}
where the exponent $A = d + \frac{2n}{m}-2n-(2n+R)\epsilon$ is required to be strictly greater than $-2$ in order for the $D^*$ integral to converge.  This is equivalent to requiring
\begin{equation}\label{E:SetAlpha}
\epsilon < \frac{1}{2n+R}\left(d+\frac{2n}{m}-2n+2\right).
\end{equation}
At this stage, fix $R$ large enough to ensure the right hand side of \eqref{E:SetAlpha} $< 1$. Lemma \ref{L:IntegralEstimateDisc} now applies, since $\epsilon\in(0,1)$.  Doing this yields,
\begin{align*}
\int_{\h_{m/n}} |K(z,w)|h(w)^{-\epsilon}\,dV(w) &\lesssim |z_2|^{c+2n\epsilon -2n} \left(|z_2|^{2n}-|z_1|^{2m}\right)^{-\epsilon} \left(1-|z_2|^{2}\right)^{-\epsilon} \notag \\
&< |z_2|^{-R\epsilon}\left(|z_2|^{2n}-|z_1|^{2m}\right)^{-\epsilon} \left(1-|z_2|^{2}\right)^{-\epsilon}  \\
&= h(z)^{-\epsilon} \notag,
\end{align*}
as long as the exponent $c+2n\epsilon -2n > -R\epsilon$.  This is equivalent to saying
\begin{equation}\label{E:SetBeta}
\epsilon > \frac{2n-c}{2n+R}.
\end{equation}

Inequalities \eqref{E:SetAlpha} and \eqref{E:SetBeta} give the interval $[\alpha,\beta)$ in Lemma \ref{L:GeneralSchur}.  Indeed, it suffices to take 
$\alpha = \frac{2n-c}{2n+R}$ and 
$\beta = \frac{1}{2n+R}\left(d+\frac{2n}{m} - 2n + 2 \right)$.

To generate the interval $[\gamma,\delta)$ needed in Lemma \ref{L:GeneralSchur}, simply switch the roles of $c$ and $d$ in the argument above.  This leads to taking
$\gamma = \frac{2n-d}{2n+R}$ and 
$\delta = \frac{1}{2n+R}\left(c+\frac{2n}{m} - 2n + 2 \right)$.  Lemma \ref{L:GeneralSchur} now gives the claimed result.
\end{proof}
\end{lemma}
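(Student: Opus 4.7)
The plan is to apply the generalized Schur test (Lemma \ref{L:GeneralSchur}) to the kernel bound in \eqref{E:KernelBound}. The bound factors over three natural ``distances'' on $\h_{m/n}$: the disc distance $|1-z_2\bar w_2|$, the Hartogs fiber distance $|z_2^n\bar w_2^n - z_1^m\bar w_1^m|$, and the power weights $|z_2|^c|w_2|^d$. A Schur weight should therefore pick up a factor for each of these. I propose the auxiliary function
\begin{equation*}
h(w) = |w_2|^{R}\bigl(|w_2|^{2n}-|w_1|^{2m}\bigr)\bigl(1-|w_2|^{2}\bigr),
\end{equation*}
where $R\ge 0$ is a free parameter to be selected at the end.

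To verify hypothesis \eqref{E:Schur1}, I substitute the kernel bound and perform the $w_1$ integral over $\{|w_1|<|w_2|^{n/m}\}$ first. The key move is the $m$-to-$1$ substitution $u=w_1^m/w_2^n$, which transforms the inner integral into a one-dimensional Bergman-type integral over the unit disc to which Lemma \ref{L:IntegralEstimateDisc} directly applies. The outcome is an expression whose $w_2$-dependence reduces the remaining outer integral to another application of Lemma \ref{L:IntegralEstimateDisc}. This second application requires the net power of $|w_2|$ to exceed $-2$ (an upper bound on $\epsilon$) and, in order to conclude with $h(z)^{-\epsilon}$, the net power of $|z_2|$ must be at most $-R\epsilon$ (a lower bound on $\epsilon$). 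The parameter $R$ is chosen large enough that this interval $[\alpha,\beta)$ of admissible $\epsilon$ lies inside $(0,1)$; the conditions \eqref{CDConditions} are exactly what is needed so that $\alpha<\beta$.

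By the symmetry of \eqref{E:KernelBound} in the pair $(c,d)$ under interchange of $z$ and $w$, the verification of \eqref{E:Schur2} is formally identical, with $c$ and $d$ swapped; this yields the companion interval $[\gamma,\delta)$. Lemma \ref{L:GeneralSchur} then converts the two intervals into the stated $p$-range \eqref{E:LpBoundCD} via the inequality $\frac{\gamma}{\beta}+1<p<\frac{\delta}{\alpha}+1$.

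The main obstacle is bookkeeping: tracking the exponents of $|z_2|$, $|w_2|$, $(|z_2|^{2n}-|z_1|^{2m})$ and $(1-|z_2|^2)$ through the two successive applications of Lemma \ref{L:IntegralEstimateDisc}, and selecting $R$ so that \emph{some} valid $\epsilon$ exists. One subtlety is that Lemma \ref{L:IntegralEstimateDisc} requires both $\epsilon\in(0,1)$ and the associated power of $|w|$ to satisfy $A<2$; the free parameter $R$ gives enough flexibility to meet both constraints simultaneously, but only when \eqref{CDConditions} holds, which is why those hypotheses are both necessary and sufficient for the $p$-interval to be non-degenerate, as observed in Remark \ref{R:CDRemark}.
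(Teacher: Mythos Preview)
Your proposal is correct and follows essentially the same route as the paper's proof: the identical auxiliary function $h(w)=|w_2|^{R}\bigl(|w_2|^{2n}-|w_1|^{2m}\bigr)\bigl(1-|w_2|^{2}\bigr)$, the same $m$-to-$1$ substitution $u=w_1^m/w_2^n$ on the inner integral, two successive applications of Lemma~\ref{L:IntegralEstimateDisc}, the choice of $R$ large to force the admissible $\epsilon$-interval into $(0,1)$, and the $c\leftrightarrow d$ symmetry for the second Schur hypothesis. The paper carries out exactly this bookkeeping and arrives at $\alpha=\frac{2n-c}{2n+R}$, $\beta=\frac{1}{2n+R}\bigl(d+\frac{2n}{m}-2n+2\bigr)$, with $\gamma,\delta$ obtained by swapping $c$ and $d$.
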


\subsection{Mapping of the differentiated projection}

Boundedness of the Bergman projection associated to $\h_{1}$ on the Sobolev space $L^p_1(\h_1)$ can now be given.
In \cite{Edh16}, the Bergman kernel of $\h_{1/n}, n\in\Z^+,$ is computed as
\begin{equation}\label{E:FormulaBergmanThinHartogs}
B_{1/n}(z,w) = \frac{1}{\pi^2} \frac{z_2^n \bar{w}_2^n}{(1-z_2\bar{w}_2)^2(z_2^n\bar{w}_2^n - z_1\bar{w}_1)^2}.
\end{equation}
Throughout the section, subscripts on the projection $\bm{B}_{1/n}$ and the kernel $B_{1/n}(z,w)$ are dropped.

\begin{theorem}\label{T:MappingPropsBergmanDerivatives} On $\h_{1/n}$, $n\in\Z^+$, it holds that
\begin{itemize}
\item[(1)] $\frac{\dee}{\dee z_1}\circ \bm{B}$ maps $L^p_1(\h_{1/n}) \to L^p(\h_{1/n})$ for $p \in \Big(1,\frac{2n+2}{2n}\Big)$.
\item[(2)] $\frac{\dee}{\dee z_2}\circ \bm{B}$ maps $L^p_1(\h_{1/n}) \to L^p(\h_{1/n})$ for $p \in \left(\frac{2n+2}{n+3},2\right)$. 
\end{itemize}
\end{theorem}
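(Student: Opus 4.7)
The proof adapts the one-dimensional template of Theorem \ref{T:BergmanMappingLpSobolevDisc} to $\h_{1/n}$. Since the kernel \eqref{E:FormulaBergmanThinHartogs} depends on $z_j$ only through $s_j := z_j\bar w_j$ for $j=1,2$, one has the commutation identities
\[
z_j\,\frac{\dee B}{\dee z_j} \;=\; \ct_{w_j} B, \qquad j=1,2.
\]
The strategy for each part is: start from $\frac{\dee}{\dee z_j}\bm{B}f(z) = \frac{1}{z_j}\int \ct_{w_j}B\cdot f\,dV$, integrate by parts via Proposition \ref{P:IntByParts} to move $\ct_{w_j}$ onto $f$, and then apply Lemma \ref{T:TypeCDOperators} to the resulting kernel. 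For $j=1$ the integration by parts is carried out in $w_1$ over the disc $\{|w_1|<|w_2|^n\}$ (with $w_2$ fixed); for $j=2$, in $w_2$ over the annulus $\{|w_1|^{1/n}<|w_2|<1\}$ (with $w_1$ fixed). Each boundary circle is centered at $0$, so $\ct_{w_j}r\equiv 0$ and Proposition \ref{P:IntByParts} applies; Fubini assembles the slicewise identities into the full integral over $\h_{1/n}$.

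For part (1), the $1/z_1$ must be neutralized. Subtract the constant-in-$s_1$ part of $B$, defining
\[
K_1(z,w) := B(z,w) - \frac{1}{\pi^2(1-s_2)^2\,s_2^n}.
\]
A direct computation together with the estimate $|2s_2^n - s_1|\approx |s_2|^n$ (valid since $|s_1|\le|s_2|^n$ on $\h_{1/n}\times\h_{1/n}$) gives
\[
|K_1(z,w)| \;\lesssim\; \frac{|z_1||w_1|}{|1-z_2\bar w_2|^2\,|z_2^n\bar w_2^n - z_1\bar w_1|^2}.
\]
Since $B-K_1$ is independent of $w_1$, $\ct_{w_1}K_1 = \ct_{w_1}B$, and integration by parts yields
\[
\frac{\dee}{\dee z_1}\bm{B}f(z) \;=\; -\frac{1}{z_1}\int_{\h_{1/n}}K_1(z,w)\,\ct_{w_1}f(w)\,dV(w).
\]
Writing $|\ct_{w_1}f(w)|\le |w_1|\,h(w)$ with $h\in L^p(\h_{1/n})$ (which holds since $f\in L^p_1$), the effective kernel satisfies
\[
\frac{|K_1(z,w)|}{|z_1|}\cdot|w_1|\;\lesssim\;\frac{|w_1|^2}{|1-s_2|^2|s_2^n-s_1|^2}\;\le\;\frac{|w_2|^{2n}}{|1-s_2|^2|s_2^n-s_1|^2},
\]
using $|w_1|\le|w_2|^n$. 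Lemma \ref{T:TypeCDOperators} with $c=0$, $d=2n$ (together with Remark \ref{R:CDRemark}, which forces the lower bound on $p$ to $1$) then yields boundedness for $p\in(1,(2n+2)/(2n))$.

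For part (2), no Taylor subtraction is needed because $B$ already carries a factor of $z_2^n$ in its numerator; in particular $|B(z,w)|/|z_2|\lesssim |z_2|^{n-1}|w_2|^n\big/(|1-s_2|^2|s_2^n-s_1|^2)$. After integration by parts,
\[
\frac{\dee}{\dee z_2}\bm{B}f(z) \;=\; -\frac{1}{z_2}\int_{\h_{1/n}}B(z,w)\,\ct_{w_2}f(w)\,dV(w),
\]
and, using again $|\ct_{w_2}f(w)|\le|w_2|h(w)$ for some $h\in L^p$, the effective kernel fits Lemma \ref{T:TypeCDOperators} with $c=n-1$ and $d=n+1$. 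A direct verification of \eqref{CDConditions} passes, and the interval from \eqref{E:LpBoundCD} is exactly $\big((2n+2)/(n+3),\,2\big)$.

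The main obstacle will be rigorously justifying the integration-by-parts step, since $\h_{1/n}$ is not smoothly bounded and $B(z,w)$ has boundary singularities. One applies Proposition \ref{P:IntByParts} on the relevant disc and annulus slices for a.e.\ fixed $w_2$ or $w_1$, reassembles by Fubini, and, if needed, approximates $f\in L^p_1$ by smoother functions away from the singular corner. A secondary but crucial bookkeeping point is that the extra factor of $|w_j|$ absorbed from $\ct_{w_j}f = \bar w_j\,\tfrac{\dee f}{\dee\bar w_j} - w_j\,\tfrac{\dee f}{\dee w_j}$ is essential for reaching the ranges claimed: without exploiting it, the kernel estimates alone would produce strictly smaller intervals of $p$-boundedness.
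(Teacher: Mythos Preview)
Your proposal is correct and follows essentially the same route as the paper: the same commutation identity $z_j\partial_{z_j}B=\ct_{w_j}B$, the same slicewise integration by parts (disc in $w_1$, annulus in $w_2$), the same subtracted kernel $K_1(z,w)=B(z,w)-B\big((0,z_2),(0,w_2)\big)$ for part (1), and the same final application of Lemma \ref{T:TypeCDOperators} with $(c,d)=(0,2n)$ and $(c,d)=(n-1,n+1)$. The only cosmetic difference is that the paper splits $\ct_{w_j}f$ into its two summands and applies the lemma to each kernel $\tfrac{w_j}{z_j}K$ and $\tfrac{\bar w_j}{z_j}K$ separately, whereas you keep $\ct_{w_j}f$ together and absorb the factor $|w_j|$ into the kernel estimate; the resulting bounds are identical. (One tiny point: in your $K_1$ estimate only $|2s_2^n-s_1|\lesssim|s_2|^n$ is needed and used, not $\approx$.)
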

\begin{proof}
The spirit is similar to the proof of Theorem \ref{T:BergmanMappingLpSobolevDisc}.  Let $f\in L^p_1(\h_{1/n})$ for $1<p<\infty$, and $j=1,2$.
\begin{align}
\frac{\dee}{\dee z_j}\bm{B}f(z) &= \frac{\dee}{\dee z_j} \int_{\h_{1/n}} B(z,w)f(w)\,dV(w) 
= \frac{1}{z_j} \int_{\h_{1/n}} \bar{w}_j \frac{\dee}{\dee \bar{w}_j} \left( B(z,w) \right) f(w)\,dV(w) \notag \\
&= \frac{1}{z_j} \int_{\h_{1/n}} \ct_{w_j} \left( B(z,w) \right) f(w)\,dV(w), \label{E:ZjDerivHartogs1}
\end{align}
since $B(z,w)$ is anti-holomorphic in $w$.

The $z_1$ and $z_2$ derivatives are handled slightly differently.  Consider the $z_2$ derivative first.  Equation \eqref{E:ZjDerivHartogs1} says
\begin{align}\label{E:FubiniAnnuli}
\frac{\dee}{\dee z_2}\bm{B}f(z) = \frac{1}{z_2} \int_{|w_1|=0}^{|w_1|=1} \left\{ \int_\ca \ct_{w_2} \left(B(z,w) \right) f(w)\,dV(w_2)\right\} dV(w_1),
\end{align}
where the inner integral is over $\ca = \{w_2: |w_1|^{1/n} <|w_2|<1 \}$ for each fixed $w_1$.  Since $\ca$ is an annulus centered at the origin, Proposition \ref{P:IntByParts} transfers the vector field $\ct_{w_2}$ onto $f$ without picking up a boundary integral:
\begin{align}
\eqref{E:FubiniAnnuli} &= -\frac{1}{z_2} \int_{|w_1|=0}^{|w_1|=1} \left\{ \int_\ca  B(z,w) \ct_{w_2}f(w)\,dV(w_2)\right\} dV(w_1) \notag \\
&= -\frac{1}{z_2}  \int_{\h_{1/n}}  B(z,w) \ct_{w_2}f(w)\,dV(w) \notag \\
&=  \int_{\h_{1/n}} \frac{w_2}{z_2} B(z,w) \, \frac{\dee f}{\dee w_2}(w)\,dV(w) -  \int_{\h_{1/n}}  \frac{\bar{w}_2}{z_2} B(z,w) \, \frac{\dee f}{\dee \bar{w}_2}(w)\,dV(w), \label{E:z2SplitInTwoIntegrals}
\end{align}
derivatives interpreted distributionally.  Since $f\in L^p_1(\h_{1/n})$ , $\frac{\dee f}{\dee w_2}, \frac{\dee f}{\dee \bar{w}_2}\in L^p\left(\h_{1/n}\right)$.

By  \eqref{E:FormulaBergmanThinHartogs}, the integral kernels in \eqref{E:z2SplitInTwoIntegrals} satisfy
\begin{align*}
\left| \frac{w_2}{z_2} B(z,w) \right| = \left| \frac{\bar{w}_2}{z_2} B(z,w) \right| \approx \frac{|z_2|^{n-1}|w_2|^{n+1}}{|1-z_2\bar{w}_2|^2|z_2^n\bar{w}_2^n - z_1\bar{w}_1|^2}.
\end{align*}
Therefore Lemma \ref{T:TypeCDOperators}, with $c=n-1$, $d=n+1$, and $m=1$,  shows 
$$\left\|\frac{\dee}{\dee z_2}\circ \bm{B}f\right\|_{L^p\left({\h_{1/n}}\right)}\lesssim \left\|\frac{\dee f}{\dee w_2}\right\|_{L^p}+\left\|\frac{\dee f}{\dee \bar w_2}\right\|_{L^p}\leq \left\|f\right\|_{L^p_1\left({\h_{1/n}}\right)}$$
for $p \in \left(\frac{2n+2}{n+3},2\right)$. This establishes part (2) of the theorem.

Consider the $z_1$ derivative.  Equation \eqref{E:ZjDerivHartogs1} says
\begin{align}\label{E:FubiniDisc}
\frac{\dee}{\dee z_1}\bm{B}f(z) = \frac{1}{z_1} \int_{|w_2|=0}^{|w_2|=1} \left\{ \int_\cd \ct_{w_1} \left(B(z,w) \right) f(w)\,dV(w_1)\right\} dV(w_2),
\end{align}
where the inner integral is taken over $\cd = \{w_1: |w_1|<|w_2|^n \}$ for each fixed $w_2$.  Estimating this term requires more care than was necessary for the $z_2$ derivative. As in the proof of Lemma \ref{T:BergmanMappingLpSobolevDisc}, define a kernel by subtracting from $B(z,w)$ the term $B\big((0,z_2),(0,w_2)\big)$.  Equation \eqref{E:FormulaBergmanThinHartogs} shows
\begin{align}
K(z,w) &:= B(z,w) - B\big((0,z_2),(0,w_2)\big) \notag \\
&= \frac{1}{\pi^2} \left[ \frac{z_2^n \bar{w}_2^n}{(1-z_2\bar{w}_2)^2(z_2^n\bar{w}_2^n - z_1\bar{w}_1)^2} - \frac{1}{z_2^n \bar{w}_2^n (1-z_2\bar{w}_2)^2} \right] \notag \\
&= \frac{1}{\pi^2} \frac{2z_1\bar{w}_1 z_2^n \bar{w}_2^n - z_1^2 \bar{w}_1^2}{z_2^n \bar{w}_2^n (1-z_2\bar{w}_2)^2(z_2^n\bar{w}_2^n - z_1\bar{w}_1)^2}. \label{D:K(z,w)ThisHartogs}
\end{align}

Since $B\big((0,z_2),(0,w_2)\big)$ is independent of $w_1$ and $ \bar{w}_1$, $K(z,w)$ may be substituted for $B(z,w)$ in equation \eqref{E:FubiniDisc}.  Since $\cd$ is a disc centered at the origin, Proposition \ref{P:IntByParts} applies:
\begin{align}
\eqref{E:FubiniDisc} &= -\frac{1}{z_1} \int_{|w_2|=0}^{|w_2|=1} \left\{ \int_\cd  K(z,w) \ct_{w_1}f(w)\,dV(w_1)\right\} dV(w_2) \notag \\
&= -\frac{1}{z_1}  \int_{\h_{1/n}}  K(z,w) \ct_{w_1}f(w)\,dV(w) \notag \\
&=  \int_{\h_{1/n}} \frac{w_1}{z_1} K(z,w) \, \frac{\dee f}{\dee w_1}(w)\,dV(w) -  \int_{\h_{1/n}}  \frac{\bar{w}_1}{z_1} K(z,w) \, \frac{\dee f}{\dee \bar{w}_1}(w)\,dV(w), \label{E:z1SplitInTwoIntegrals}
\end{align}
derivatives interpreted distributionally, as before.  By hypothesis, the functions $\frac{\dee f}{\dee w_1}, \frac{\dee f}{\dee \bar{w}_1}\in L^p\left(\h_{1/n}\right)$.

From \eqref{D:K(z,w)ThisHartogs}, the kernels in \eqref{E:z1SplitInTwoIntegrals} satisfy
\begin{align*}
\left| \frac{w_1}{z_1} K(z,w) \right| = \left| \frac{\bar{w}_1}{z_1} K(z,w) \right| &\approx \left| \frac{w_1}{z_1} \right| \cdot \frac{\left|2z_1\bar{w}_1 z_2^n \bar{w}_2^n - z_1^2 \bar{w}_1^2\right|}{|z_2|^n |\bar{w}_2|^n |1-z_2\bar{w}_2|^2 |z_2^n\bar{w}_2^n - z_1\bar{w}_1|^2} \notag \\
&\lesssim \left| \frac{w_1}{z_1} \right| \cdot \frac{|z_1||w_1||z_2|^n|w_2|^n}{|z_2|^n |{w}_2|^n |1-z_2\bar{w}_2|^2 |z_2^n\bar{w}_2^n - z_1\bar{w}_1|^2}\\
&\le  \frac{|w_2|^{2n}}{|1-z_2\bar{w}_2|^2 |z_2^n\bar{w}_2^n - z_1\bar{w}_1|^2}.
\end{align*}
The last two inequalities hold because $z,w \in \h_{1/n}$.  Lemma \ref{T:TypeCDOperators}, with $c=0$, $d=2n$, and $m=1$, shows 
$$\left\|\frac{\dee}{\dee z_1}\circ \bm{B}f\right\|_{L^p\left({\h_{1/n}}\right)}\lesssim\,\, \left\|f\right\|_{L^p_1\left({\h_{1/n}}\right)}$$
for $p \in \left(1, \frac{2n+2}{2n}\right)$, establishing  part (1) of the theorem.
\end{proof}

\begin{corollary}\label{C:regularity}
The Bergman projection $\bm{B}$ is a bounded operator from $L^p_1(\h_1) \to L^p_1(\h_1)$, for all $\frac{4}{3}<p<2$.
\end{corollary}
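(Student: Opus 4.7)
The plan is to assemble Corollary \ref{C:regularity} directly from Theorem \ref{T:MappingPropsBergmanDerivatives} (specialized to $n=1$) together with the $L^p_0$ boundedness range of $\bm{B}$ on $\h_1$ given by Theorem \ref{T:JGALpBoundedness}.  Fix $p\in(\tfrac43,2)$ and $f\in L^p_1(\h_1)$.  I need to verify two kinds of estimate:
\[
\|\bm{B}f\|_{L^p(\h_1)}\lesssim \|f\|_{L^p_1(\h_1)},
\qquad
\|\partial \bm{B}f\|_{L^p(\h_1)}\lesssim \|f\|_{L^p_1(\h_1)}
\]
for every first-order partial derivative $\partial$.

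For the zeroth-order term, Theorem \ref{T:JGALpBoundedness}(1) with $m=n=1$ yields the $L^p_0$-boundedness interval $(\lambda(1,1),\rho(1,1))=(\tfrac43,4)$, which certainly contains $(\tfrac43,2)$.  Hence $\|\bm{B}f\|_{L^p}\lesssim \|f\|_{L^p}\leq \|f\|_{L^p_1}$.  For the first-order terms, the key simplification is that $\bm{B}f\in\co(\h_1)$, so $\dbar_{z_1}\bm{B}f=\dbar_{z_2}\bm{B}f\equiv 0$ and only the holomorphic derivatives $\partial_{z_1}\bm{B}f$ and $\partial_{z_2}\bm{B}f$ need to be estimated.

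For these I invoke Theorem \ref{T:MappingPropsBergmanDerivatives} with $n=1$.  Part (1) gives that $\partial_{z_1}\circ\bm{B}:L^p_1(\h_1)\to L^p(\h_1)$ is bounded for $p\in\bigl(1,\tfrac{2n+2}{2n}\bigr)=(1,2)$, and part (2) gives that $\partial_{z_2}\circ\bm{B}:L^p_1(\h_1)\to L^p(\h_1)$ is bounded for $p\in\bigl(\tfrac{2n+2}{n+3},2\bigr)=(1,2)$.  Intersecting these two intervals with the $L^p_0$ range $(\tfrac43,4)$ produces precisely the range $p\in(\tfrac43,2)$ claimed.

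There is essentially no obstacle: all the real work has been done in Lemma \ref{T:TypeCDOperators} and Theorem \ref{T:MappingPropsBergmanDerivatives}.  The proof is a one-paragraph deduction whose only content is checking which factor pins down the endpoints.  The lower endpoint $\tfrac43$ comes from the $L^p_0$-boundedness of $\bm{B}$ itself (equivalently, the unboundedness predicted by Theorem \ref{T:DerivsJLBerg} at $j=l=0$), while the upper endpoint $2$ comes from the differentiated-projection estimates; in particular, $p=2$ is ruled out by Corollary \ref{C:L2SobFail}, so the interval $(\tfrac43,2)$ is sharp on the right.
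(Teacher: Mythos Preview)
Your proof is correct and follows essentially the same route as the paper: specialize Theorem \ref{T:MappingPropsBergmanDerivatives} to $n=1$ to get $\partial_{z_j}\circ\bm{B}:L^p_1\to L^p$ bounded for $p\in(1,2)$, note that $\bar\partial$-derivatives of $\bm{B}f$ vanish, and intersect with the $L^p_0$ range $(\tfrac43,4)$ from Theorem \ref{T:JGALpBoundedness}. Your additional remarks on which endpoint is governed by which ingredient and on the sharpness at $p=2$ via Corollary \ref{C:L2SobFail} are accurate but go slightly beyond what the paper's proof records.
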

\begin{proof} Set $n=1$ in Theorem \ref{T:MappingPropsBergmanDerivatives} and intersect the two intervals of $L^p$ boundedness. It follows that $D\circ \bm{B}$ is $L^p$ bounded for $1<p<2$ for any first derivative $D$. Since $\bm{B}$ itself is $L^p$ bounded for $\frac{4}{3}<p<4$ (Theorem \ref{T:JGALpBoundedness}), the result follows.
\end{proof}

\section{A substitute operator on the Hartogs triangle}\label{S:SubstituteOperators}

In light of Theorem \ref{T:Main}, it is natural to seek operators related to $\bm{B}$ which have better Sobolev mapping behavior than $\bm{B}$ itself.
Pursuing an idea in \cite{ChaEdhMcN18},  a sub-Bergman operator is constructed on $\h_1$ with such improved behavior. $\h_1$ is taken
only for simplicity; the general pattern below extends to other domains.

Consider the set of bounded monomials on $\h_1$:
\begin{equation*}
\cs(\h_1, L^\infty) = \left\{ \alpha=(\alpha_1,\alpha_2) : \alpha_1 \ge 0, \ \ \alpha_1+\alpha_2\ge 0 \right\}.
\end{equation*}
Lemma \ref{L:(p,m/n)-allowable MultiIndices and Norm} shows that $\cs(\h_1, L^\infty)=\cs(\h_1, L^p)$ for $p\geq 4$ and $\cs(\h_1, L^\infty) \subsetneq \cs(\h_1, L^2)$.  Following \cite{ChaEdhMcN18}, define the $L^\infty$ sub-Bergman kernel
\begin{align}\label{E:SubBergKernDef}
\wt{B^\infty}(z,w) := \sum_{\alpha \in \cs(\h_1, L^\infty)} \frac{z^\alpha \bar{w}^\alpha}{\norm{z^\alpha}^2_{L^2(\h_1)}}.
\end{align}
Notice the series in \eqref{E:SubBergKernDef} is only part of the usual series that defines the Bergman kernel. The $L^\infty$ sub-Bergman projection is 
\begin{align}\label{E:SubBergProjDef}
\wt{\bm{B}^{\infty}}f(z) := \int_{\h_1} \wt{B^\infty}(z,w) f(w)\,dV(w)
\end{align}
whenever the integral converges; $f$ is taken from certain $L^p_k(\h_1)$ classes below.  

 A rational expression for \eqref{E:SubBergKernDef} follows from \cite[Proposition 4.33]{ChaEdhMcN18}:
\begin{align*}
\wt{B^\infty}(z,w) = \frac{1}{\pi^2}\frac{2z_2^2\bar{w}_2^2 - z_2^3 \bar{w}_2^3}{(z_2\bar{w}_2 - z_1\bar{w}_1)^2(1-z_2\bar{w}_2)^2}. 
\end{align*}
This immediately yields  the bound
\begin{align}\label{E:BinftyBound}
\left|\wt{B^\infty}(z,w)\right| \lesssim \frac{|z_2|^2|{w}_2|^2}{|z_2\bar{w}_2 - z_1\bar{w}_1|^2|1-z_2\bar{w}_2|^2}.
\end{align}
Lemma \ref{T:TypeCDOperators} with $m=n=1$ and $c=d=2$ shows for each fixed $1<p<\infty$,
\begin{equation}\label{E:LpSubBergmanBoundedness}
\norm{\wt{\bm{B}^\infty}f}_{L^p(\h_1)} \lesssim \norm{f}_{L^p(\h_1)}, \qquad f \in L^p(\h_1).
\end{equation}

Derivatives are now considered.  Mapping properties of $\frac{\dee}{\dee z_2}\circ \wt{\bm{B}^{\infty}}$ may be obtained by following the proof of Theorem \ref{T:MappingPropsBergmanDerivatives} with $\wt{B^\infty}(z,w)$ replacing $B(z,w)$. The steps leading up to \eqref{E:z2SplitInTwoIntegrals} show, for $f \in L^p_1(\h_1)$,
\begin{equation*}
\frac{\dee}{\dee z_2} \wt{\bm{B}^{\infty}}f(z) = \int_{\h_{1}} \frac{w_2}{z_2} \wt{B^\infty}(z,w) \, \frac{\dee f}{\dee w_2}(w)\,dV(w) -  \int_{\h_{1}}  \frac{\bar{w}_2}{z_2} \wt{B^\infty}(z,w) \, \frac{\dee f}{\dee \bar{w}_2}(w)\,dV(w).
\end{equation*}
Thus the operator $\frac{\dee}{\dee z_2}\circ \wt{\bm{B}^{\infty}}$ is controlled  by the kernels
\begin{align}\label{E:z2BinftyBound}
\left|\frac{{w_2}}{z_2}\wt{B^\infty}(z,w)\right| = \left|\frac{\bar{w}_2}{z_2}\wt{B^\infty}(z,w)\right| \lesssim \frac{|z_2||{w}_2|^3}{|z_2\bar{w}_2 - z_1\bar{w}_1|^2|1-z_2\bar{w}_2|^2}.
\end{align}
Lemma \ref{T:TypeCDOperators} (and Remark \ref{R:CDRemark}) with $m=n=1$, $c=1$, $d=3$, shows for each fixed $1<p<4$,
\begin{equation}\label{E:z2LpSubBergmanBoundedness}
\left\|\frac{\dee}{\dee z_2}\circ \wt{\bm{B}^\infty}f\right\|_{L^p\left({\h_{1}}\right)}\lesssim \left\|\frac{\dee f}{\dee w_2}\right\|_{L^p}+\left\|\frac{\dee f}{\dee \bar w_2}\right\|_{L^p}\leq \left\|f\right\|_{L^p_1\left({\h_{1}}\right)}.
\end{equation}

Mapping properties of $\frac{\dee}{\dee z_1}\circ \wt{\bm{B}^{\infty}}$ may be obtained by considering
\begin{align*}
\wt{K^\infty}(z,w) &:= \wt{B^\infty}(z,w) - \wt{B^\infty}\big((0,z_2),(0,w_2)\big) \notag \\
&= \frac{1}{\pi^2} \left[ \frac{2 z_2^2 \bar{w}_2^2 - z_2^3 \bar{w}_2^3}{(1-z_2\bar{w}_2)^2(z_2\bar{w}_2 - z_1\bar{w}_1)^2} - \frac{2 z_2^2 \bar{w}_2^2 - z_2^3 \bar{w}_2^3}{z_2^2 \bar{w}_2^2 (1-z_2\bar{w}_2)^2} \right] \notag \\
&= \frac{1}{\pi^2} \frac{z_1\bar{w}_1 \left(4z_2\bar{w}_2   -   2z_2^2\bar{w}_2^2   -2 z_1\bar{w}_1   + z_1\bar{w}_1 z_2\bar{w}_2 \right)   }{(1-z_2\bar{w}_2)^2(z_2\bar{w}_2 - z_1\bar{w}_1)^2}. 
\end{align*}
Simple estimation shows $\wt{K^\infty}(z,w)$ satisfies a stronger estimate than \eqref{E:BinftyBound}:
\begin{align*}
\left| \wt{K^\infty}(z,w) \right| &\lesssim \frac{|z_1| |{w}_1 | |z_2| |{w}_2 |   }{|1-z_2\bar{w}_2|^2|z_2\bar{w}_2 - z_1\bar{w}_1|^2}.
\end{align*}
Repeating the steps from \eqref{E:FubiniDisc} through \eqref{E:z1SplitInTwoIntegrals} -- with $\wt{K^\infty}(z,w)$ replacing $K(z,w)$ -- shows, for $f \in L^p_1(\h_1)$,
\begin{equation*}
\frac{\dee}{\dee z_1} \wt{\bm{B}^{\infty}}f(z) = \int_{\h_{1}} \frac{w_1}{z_1} \wt{K^\infty}(z,w) \, \frac{\dee f}{\dee w_1}(w)\,dV(w) -  \int_{\h_{1}}  \frac{\bar{w}_1}{z_1} \wt{K^\infty}(z,w) \, \frac{\dee f}{\dee \bar{w}_1}(w)\,dV(w).
\end{equation*}
Thus the operator $\frac{\dee}{\dee z_1}\circ \wt{\bm{B}^{\infty}}$ is controlled by the kernels
\begin{align*}
\left|\frac{{w_1}}{z_1} \wt{K^\infty}(z,w) \right| = \left|\frac{{\bar{w}_1}}{z_1} \wt{K^\infty}(z,w) \right| 
&\lesssim \frac{|z_2| |{w}_2|^3   }{|1-z_2\bar{w}_2|^2|z_2\bar{w}_2 - z_1\bar{w}_1|^2}. 
\end{align*}
This bound is identical to the bound in \eqref{E:z2BinftyBound}.  Consequently, for each fixed $1<p<4$, 
\begin{equation}\label{E:z1LpSubBergmanBoundedness}
\left\|\frac{\dee}{\dee z_1}\circ \wt{\bm{B}^\infty}f\right\|_{L^p\left({\h_{1}}\right)}\lesssim \left\|\frac{\dee f}{\dee w_1}\right\|_{L^p}+\left\|\frac{\dee f}{\dee \bar w_1}\right\|_{L^p}\leq \left\|f\right\|_{L^p_1\left({\h_{1}}\right)}.
\end{equation}

Combining \eqref{E:LpSubBergmanBoundedness}, \eqref{E:z2LpSubBergmanBoundedness}, and \eqref{E:z1LpSubBergmanBoundedness} proves the following
\begin{corollary}\label{C:sub}
$\wt{\bm{B}^\infty}$ maps $L^p_1(\h_1) \to L^p_1(\h_1)$ boundedly for all $1<p<4$.
\end{corollary}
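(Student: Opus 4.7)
The plan is to assemble three ingredients that have already been proved in the paragraphs preceding the statement. Let $f \in L^p_1(\h_1)$ with $1<p<4$. Since $\wt{B^\infty}(z,w)$ is holomorphic in $z$ (each term $z^\alpha \bar w^\alpha/\|z^\alpha\|_{L^2}^2$ in the defining series \eqref{E:SubBergKernDef} is holomorphic in $z$), the image $\wt{\bm{B}^\infty}f$ is holomorphic on $\h_1$. Therefore all $\bar z_j$-derivatives of $\wt{\bm{B}^\infty}f$ vanish, and to bound the $L^p_1$ norm it suffices to estimate $\wt{\bm{B}^\infty}f$ itself together with $\partial_{z_1}\wt{\bm{B}^\infty}f$ and $\partial_{z_2}\wt{\bm{B}^\infty}f$ in $L^p(\h_1)$.

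For the zeroth-order term, note that the inclusion $L^p_1(\h_1) \hookrightarrow L^p(\h_1)$ combined with \eqref{E:LpSubBergmanBoundedness} yields
\begin{equation*}
\norm{\wt{\bm{B}^\infty}f}_{L^p(\h_1)} \lesssim \norm{f}_{L^p(\h_1)} \le \norm{f}_{L^p_1(\h_1)},
\end{equation*}
which holds for all $1<p<\infty$ and in particular for $1<p<4$. For the first-order terms, estimates \eqref{E:z2LpSubBergmanBoundedness} and \eqref{E:z1LpSubBergmanBoundedness} give
\begin{equation*}
\left\|\tfrac{\dee}{\dee z_1}\wt{\bm{B}^\infty}f\right\|_{L^p(\h_1)} + \left\|\tfrac{\dee}{\dee z_2}\wt{\bm{B}^\infty}f\right\|_{L^p(\h_1)} \lesssim \norm{f}_{L^p_1(\h_1)},
\end{equation*}
valid on the common range $1<p<4$ coming from Lemma \ref{T:TypeCDOperators} applied with $m=n=1$ and parameter pairs $(c,d)=(1,3)$ and $(c,d)=(0,2)$ (the latter after invoking Remark \ref{R:CDRemark} since $d=2n=2$).

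Summing these three inequalities and using that higher derivatives are not needed for $L^p_1$ produces
\begin{equation*}
\norm{\wt{\bm{B}^\infty}f}_{L^p_1(\h_1)} \lesssim \norm{f}_{L^p_1(\h_1)} \qquad \text{for all } 1<p<4,
\end{equation*}
which is exactly the assertion of the corollary. I do not expect any genuine obstacle here: the hard analytic work (the integration-by-parts maneuver using discs and annuli via Proposition \ref{P:IntByParts}, and the Schur-type estimate via Lemma \ref{T:TypeCDOperators}) has already been carried out in the derivations of \eqref{E:LpSubBergmanBoundedness}, \eqref{E:z2LpSubBergmanBoundedness}, and \eqref{E:z1LpSubBergmanBoundedness}. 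The only item to verify is that the three boundedness intervals share the common range $1<p<4$, which they do. Compared with Corollary \ref{C:regularity}, the key improvement is that the exponents $(c,d)$ arising from $\wt{B^\infty}$ (and its $w_1$-modification $\wt{K^\infty}$) are two units larger in the relevant slots than those arising from $B$ and $K$; this widens the interval produced by Lemma \ref{T:TypeCDOperators} from $\frac43<p<2$ to $1<p<4$.
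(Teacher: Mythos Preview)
Your approach is exactly the paper's: the corollary is stated immediately after \eqref{E:LpSubBergmanBoundedness}, \eqref{E:z2LpSubBergmanBoundedness}, and \eqref{E:z1LpSubBergmanBoundedness} have been established, and the proof simply combines them. Your write-up correctly cites these three inequalities and assembles the $L^p_1$ bound.

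There is, however, a small factual slip in your parenthetical. You say the two derivative estimates come from Lemma~\ref{T:TypeCDOperators} with parameter pairs $(c,d)=(1,3)$ and $(c,d)=(0,2)$. The pair $(0,2)$ is what arose for the ordinary Bergman kernel in Theorem~\ref{T:MappingPropsBergmanDerivatives}; for the sub-Bergman operator, the bound on $\bigl|\tfrac{w_1}{z_1}\wt{K^\infty}(z,w)\bigr|$ derived just above \eqref{E:z1LpSubBergmanBoundedness} is \emph{identical} to \eqref{E:z2BinftyBound}, so the relevant pair is again $(c,d)=(1,3)$. This matters: with $m=n=1$ and $(c,d)=(0,2)$, Lemma~\ref{T:TypeCDOperators} only yields the upper endpoint $\tfrac{2m+2n}{2mn-cm}=\tfrac{4}{2}=2$, not $4$. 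Since you are citing \eqref{E:z1LpSubBergmanBoundedness} directly (which was proved with the correct parameters), your argument is sound; just correct the parenthetical so the stated $(c,d)$ actually produce the interval $(1,4)$.
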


It is not difficult to verify that $\wt{\bm{B}^\infty}$ fails to map $L^p_1(\h_1) \to L^p_1(\h_1)$ for $p\ge4$: take the monomial $f(z) = z_1 \bar{z}_2$ and follow the arguments given in Section \ref{S:NegativeMapping}.
The interested reader is invited to extend Corollary \ref{C:sub} to higher order derivatives.  The statements are

\begin{corollary}
$\wt{\bm{B}^\infty}$ maps $L^p_2(\h_1) \to L^p_2(\h_1)$ boundedly for all $1<p<2$.
\end{corollary}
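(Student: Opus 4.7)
The plan is to iterate the one-derivative argument of Corollary \ref{C:sub}. Since $\wt{\bm{B}^\infty}f$ is holomorphic on $\h_1$, it suffices to estimate the three pure/mixed second holomorphic derivatives $\partial_{z_1}^2$, $\partial_{z_2}^2$, and $\partial_{z_1}\partial_{z_2}$ of $\wt{\bm{B}^\infty}f$ in $L^p(\h_1)$.

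The first step is to introduce a \emph{twice-subtracted} kernel $\wt{K^\infty}_2(z,w)$, obtained by removing from $\wt{B^\infty}(z,w)$ its Taylor polynomial of degree at most one in the variable $s_1 = z_1\bar{w}_1$ (with $s_2 = z_2\bar{w}_2$ held fixed). A direct expansion of the formula for $\wt{B^\infty}$ produces the estimate
$$|\wt{K^\infty}_2(z,w)| \lesssim \frac{|z_1|^2|w_1|^2}{|1-z_2\bar{w}_2|^2\,|z_2\bar{w}_2 - z_1\bar{w}_1|^2}.$$
Because the subtracted polynomial has $\bar{w}_1$-degree at most $1$, $\partial_{\bar{w}_1}^2$ annihilates it; thus $\partial_{z_1}^2\wt{B^\infty} = z_1^{-2}\,\bar{w}_1^{\,2}\,\partial_{\bar{w}_1}^2\wt{K^\infty}_2$, so the $z_1^{-2}$ singularity produced by two $z_1$-derivatives is absorbed by the $|z_1|^2$ factor in the bound. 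For $\partial_{z_1}\partial_{z_2}$ I reuse the once-subtracted kernel $\wt{K^\infty}$ of Section \ref{S:PositiveMapping}, and for $\partial_{z_2}^2$ no subtraction is needed since $\wt{B^\infty}$ already carries the factor $|z_2|^2$.

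The second step is to apply Proposition \ref{P:IntByParts} twice, using the vector fields $\ct_{w_1}$ and $\ct_{w_2}$ on the appropriate slices (discs $\{|w_1|<|w_2|\}$ in the variable $w_1$, annuli $\{|w_1|<|w_2|<1\}$ in the variable $w_2$), to transfer the $\bar{w}_j$-derivatives onto $f$. The bookkeeping identity $\ct_{w_j}^2K = \bar{w}_j^{\,2}\,\partial_{\bar{w}_j}^2K + \bar{w}_j\,\partial_{\bar{w}_j}K$, valid when $K$ is antiholomorphic in $w_j$, produces an extra first-order term that is governed by a kernel of the same type and poses no new difficulty. After these manipulations, each $\partial_{z_i}\partial_{z_j}\wt{\bm{B}^\infty}f$ is represented as an integral operator applied to a linear combination of the second- and first-order derivatives of $f$, all in $L^p(\h_1)$ by hypothesis. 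Using the inequality $|w_1|\le |w_2|$ valid on $\h_1$, every resulting kernel is bounded by an expression of the form $|w_2|^d\bigl(|1-z_2\bar{w}_2|\,|z_2\bar{w}_2 - z_1\bar{w}_1|\bigr)^{-2}$ with $d\ge 2$, fitting Lemma \ref{T:TypeCDOperators} (with $m=n=1$) at parameters $c=0$, $d\ge 2$; by that lemma and Remark \ref{R:CDRemark} each of the three integral operators is bounded on $L^p(\h_1)$ precisely for $1<p<2$.

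The main technical obstacle is the design of $\wt{K^\infty}_2$: it must simultaneously vanish to order $2$ in $s_1$ at the origin (delivering the $|z_1|^2|w_1|^2$ factor that cancels $z_1^{-2}$) and be annihilated by $\partial_{\bar{w}_1}^2$ so that the subtraction does not alter the identity $\partial_{z_1}^2\wt{B^\infty} = z_1^{-2}\bar{w}_1^{\,2}\partial_{\bar{w}_1}^2\wt{K^\infty}_2$. Both demands are met precisely by the degree-$1$ Taylor polynomial in $s_1$. The sharpness of the upper endpoint $p=2$ is forced by the $\partial_{z_2}^2$ case: the factor $|z_2|^2$ in the numerator of $\wt{B^\infty}$ is exactly consumed by $z_2^{-2}$, so the parameter $c=0$ cannot be improved.
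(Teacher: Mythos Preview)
The paper does not supply a proof of this corollary: immediately before stating it the authors write ``The interested reader is invited to extend Corollary~\ref{C:sub} to higher order derivatives,'' and no argument is given. Your proposal carries out exactly the iteration the authors intend---a twice-subtracted kernel $\wt{K^\infty}_2$ for the $\partial_{z_1}^2$ piece, the once-subtracted $\wt{K^\infty}$ for the mixed derivative, no subtraction for $\partial_{z_2}^2$, two applications of Proposition~\ref{P:IntByParts} on slices, and then Lemma~\ref{T:TypeCDOperators} at $m=n=1$, $c=0$, $d\ge 2$---and the computations are correct. In particular your bound $|\wt{K^\infty}_2|\lesssim |z_1|^2|w_1|^2\big(|1-z_2\bar w_2|\,|z_2\bar w_2-z_1\bar w_1|\big)^{-2}$ follows directly from the closed form $\wt{K^\infty}_2=\pi^{-2}s_1^2(2-s_2)(3s_2-2s_1)\,s_2^{-1}(1-s_2)^{-2}(s_2-s_1)^{-2}$, and the bookkeeping identity $\ct_{w_j}^2K=\bar w_j^{\,2}\partial_{\bar w_j}^2K+\bar w_j\partial_{\bar w_j}K$ for antiholomorphic $K$ is right, so the extra first-order term is indeed governed by a kernel with $c=0$, $d\ge 3$.

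Two small remarks. First, strictly speaking the $L^p_2$ norm also contains the zeroth- and first-order pieces; these are already controlled (for $1<p<\infty$ and $1<p<4$ respectively) by \eqref{E:LpSubBergmanBoundedness}--\eqref{E:z1LpSubBergmanBoundedness}, so it is worth citing them. Second, your observation that the upper endpoint $p=2$ is dictated by the $\partial_{z_2}^2$ case---where the $|z_2|^2$ in $\wt{B^\infty}$ is exactly consumed and $c=0$ is forced---is the right heuristic for sharpness; one can confirm it by testing on $f(z)=z_1\bar z_2$ as the paper suggests after Corollary~\ref{C:sub}.
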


\begin{corollary}
$\wt{\bm{B}^\infty}$ maps $L^p_3(\h_1) \to L^p_3(\h_1)$ boundedly for all $1<p<\frac{4}{3}$.
\end{corollary}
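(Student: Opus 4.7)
The plan is to follow the architecture of Corollary~\ref{C:sub} and extend it to third-order derivatives. We must show $\partial^\alpha\circ\wt{\bm{B}^\infty}:L^p_3(\h_1)\to L^p(\h_1)$ is bounded for every multi-index $\alpha$ with $|\alpha|\le 3$ and every $p\in(1,4/3)$. For $|\alpha|\le 2$ the claim follows from \eqref{E:LpSubBergmanBoundedness}, Corollary~\ref{C:sub}, and the preceding $L^p_2$ corollary, whose $L^p$-ranges $(1,\infty)$, $(1,4)$, and $(1,2)$ all contain $(1,4/3)$. So the entire new content is the case $|\alpha|=3$, which we treat uniformly over the four splits $\alpha=(j,l)\in\{(0,3),(1,2),(2,1),(3,0)\}$.

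For fixed $\alpha=(j,l)$, differentiate under the integral. Since $\wt{B^\infty}$ is antiholomorphic in $w$ and depends on $(z,w)$ only through $s_i=z_i\bar{w}_i$,
\[
\partial_{z_1}^j\partial_{z_2}^l\wt{B^\infty}(z,w)=\frac{\bar{w}_1^j\bar{w}_2^l}{z_1^j z_2^l}\,\partial_{\bar{w}_1}^j\partial_{\bar{w}_2}^l\wt{B^\infty}(z,w).
\]
To absorb $z_1^{-j}$, modify the kernel by subtracting the degree-$(j-1)$ Taylor polynomial of $\wt{B^\infty}$ in $s_1$ at $s_1=0$ -- equivalently, delete from the series \eqref{E:SubBergKernDef} the monomials with $\alpha_1<j$. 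The resulting kernel $\wt{K}_{j,l}$ still satisfies $\partial_{\bar{w}_1}^j\partial_{\bar{w}_2}^l\wt{K}_{j,l}=\partial_{\bar{w}_1}^j\partial_{\bar{w}_2}^l\wt{B^\infty}$ and carries $(z_1\bar{w}_1)^j$ as a common factor. No subtraction in $s_2$ is needed: the $s_2^2$ factor already present in $\wt{B^\infty}$ absorbs two powers of $z_2$, and the residual $z_2^{-1}$ that appears when $l=3$ will be handled directly by Lemma~\ref{T:TypeCDOperators}, whose only size constraint on $c$ is $c>-2$.

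Next, apply the identity $\bar{w}_i^a\partial_{\bar{w}_i}^a=\ct_{w_i}(\ct_{w_i}-1)\cdots(\ct_{w_i}-a+1)$, valid on functions of $s_i$ alone because $\ct_{w_i}$ then acts as the Euler operator $s_i\partial_{s_i}$. This rewrites the differential operator on $\wt{K}_{j,l}$ as a polynomial of total degree $3$ in $\ct_{w_1},\ct_{w_2}$. Iterate Proposition~\ref{P:IntByParts}, splitting the integration by Fubini as in Theorem~\ref{T:MappingPropsBergmanDerivatives}: the $w_1$-integration is over the disc $\{|w_1|<|w_2|\}$ for fixed $w_2$, the $w_2$-integration is over the annulus $\{|w_1|<|w_2|<1\}$ for fixed $w_1$, and no boundary terms arise. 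The result is
\[
\partial_{z_1}^j\partial_{z_2}^l\wt{\bm{B}^\infty}f(z)=\frac{\pm1}{z_1^j z_2^l}\int_{\h_1}\wt{K}_{j,l}(z,w)\,L_\alpha f(w)\,dV(w),
\]
with $L_\alpha=\ct_{w_1}(\ct_{w_1}+1)\cdots(\ct_{w_1}+j-1)\,\ct_{w_2}(\ct_{w_2}+1)\cdots(\ct_{w_2}+l-1)$. Since $|L_\alpha f|$ is pointwise dominated by sums of $|w_1|^{a}|w_2|^{b}|\partial^\beta f|$ with $a\le j$, $b\le l$, $|\beta|\le 3$, we have $\|L_\alpha f\|_{L^p(\h_1)}\lesssim\|f\|_{L^p_3(\h_1)}$.

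The main obstacle is the kernel estimate. Computing $\wt{K}_{j,l}$ in closed form by partial fractions (extending the derivation of $\wt{K^\infty}=\wt{K}_{1,0}$ in Section~\ref{S:SubstituteOperators}) and using $|s_1|\le|s_2|$ on $\h_1\times\h_1$, one expects the uniform estimate
\[
|\wt{K}_{j,l}(z,w)|\lesssim\frac{|z_1|^{\,j}|w_1|^{\,j}\,|z_2|^{\,l-1}|w_2|^{\,l-1}}{|1-z_2\bar{w}_2|^2|z_2\bar{w}_2-z_1\bar{w}_1|^2}\qquad(j+l=3),
\]
interpreted in the sense that when $l=0$ the factor $|s_1|^3$ dominates $|s_2|^{-1}$ by $|s_1|\le|s_2|$ and the right-hand side remains regular. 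Dividing by $|z_1|^j|z_2|^l$, multiplying by the leading $|w_1|^j|w_2|^l$ coefficient from $L_\alpha$, and invoking $|w_1|\le|w_2|$, every subcase collapses to the same effective kernel bound
\[
|\mathrm{kernel}(z,w)|\lesssim\frac{|z_2|^{-1}|w_2|^{5}}{|1-z_2\bar{w}_2|^2|z_2\bar{w}_2-z_1\bar{w}_1|^2}.
\]
Lemma~\ref{T:TypeCDOperators} with $m=n=1$, $c=-1$, $d=5$ (the hypotheses $c,d>-2$ and $c+d>0$ are all satisfied) then yields $L^p$-boundedness on $\tfrac{4}{4+5-2}<p<\tfrac{4}{2-(-1)}$, i.e., $(4/7,4/3)$, which contains $(1,4/3)$. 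Combined with the lower-order cases, this completes the proof.
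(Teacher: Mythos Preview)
The paper does not prove this corollary; it is explicitly left to the reader as an extension of the argument establishing Corollary~\ref{C:sub}. Your approach is precisely the intended one: subtract the degree-$(j-1)$ Taylor jet of $\wt{B^\infty}$ in $s_1$, rewrite $\bar w_i^a\partial_{\bar w_i}^a$ via the Euler identity, iterate Proposition~\ref{P:IntByParts} slice-by-slice, and feed the resulting kernel bound into Lemma~\ref{T:TypeCDOperators}. Your closed form
\[
\wt K_j=\frac{(2-s_2)\,s_1^{\,j}\bigl((j+1)s_2-js_1\bigr)}{\pi^2\,s_2^{\,j-1}(1-s_2)^2(s_2-s_1)^2}
\]
(implicit in your displayed estimate, and matching the paper's $\wt{K^\infty}$ when $j=1$) gives exactly $|\wt K_j|\lesssim |s_1|^{\,j}|s_2|^{2-j}\big/|1-s_2|^2|s_2-s_1|^2$, so the exponent bookkeeping is right.

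One point of imprecision: not every term of $L_\alpha f$ ``collapses to the same effective kernel bound'' with $d=5$. The lower-order pieces of $L_\alpha$ carry coefficients bounded only by $|w_1|^{\beta_1}|w_2|^{\beta_2}$ with $\beta_1+\beta_2<3$, which yields $d=2+\beta_1+\beta_2\in\{2,3,4\}$ rather than $5$; since $|w_2|<1$, the bound with $|w_2|^5$ is \emph{not} an upper bound for those terms. This is harmless, however: for every such term one has $c=-1$ and $d\ge 2$, so the hypotheses \eqref{CDConditions} hold and the $p$-interval $\bigl(4/(2+d),\,4/3\bigr)$ from \eqref{E:LpBoundCD} always contains $(1,4/3)$. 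With that clarification the argument is complete. (Your reliance on the $L^p_2$ corollary for $|\alpha|\le 2$ is also fine, since the identical scheme with $k=2$ produces $c=0$, $d\ge 2$, hence the range $(1,2)\supset(1,4/3)$.)
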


\begin{remark} Formulas \eqref{E:SubBergKernDef} and \eqref{E:SubBergProjDef} can be modified to define the $L^\infty$ sub-Bergman kernel and projection on a general Reinhardt domain $\cR$.  More generally, for fixed $p \in [2,\infty)$,  $L^p$ sub-Bergman kernels and projections $(\wt{B^p}(z,w)$ and $\wt{\bm{B}^p})$ may be defined on $\cR$ by formulas analogous to \eqref{E:SubBergKernDef}, where the sum is taken over indices $\alpha\in \cs(\cR,L^p)$ -- see \cite[Section 3.6]{ChaEdhMcN18}.

In \cite[Section 4.2.2]{ChaEdhMcN18}, the $\wt{\bm{B}^p}$ are constructed for each $\h_{m/n}$ and shown to stabilize into $m+n$ representatives.  These operators are more regular on $L^p_0$ than $\bm{B}$ is -- see \cite[Theorem 4.3]{ChaEdhMcN18}.  This improved regularity has consequences for holomorphic duality and approximation -- see \cite[Section 4.4]{ChaEdhMcN18}.
\end{remark}

\section*{Acknowledgments}
The authors thank the Erwin Schr\" odinger Institute, Vienna for providing us a fruitful environment for collaboration during a December 2018 workshop.  
The first author also thanks Texas A\&M at Qatar for hosting the stimulating workshop {\em Analysis and Geometry in Several Complex Variables III} in January 2019.

The authors are also grateful to the two anonymous referees, whose comments improved the mathematical and expository content of the paper.

\bibliographystyle{acm}
\bibliography{EdhMcN19}

\end{document}